\newcommand{\EQ}{\begin{equation}}
\newcommand{\EN}{\end{equation}}
\newtheorem{theorem}{Theorem}[section]
\newtheorem{definition}[theorem]{Definition}
\newtheorem{corollary}[theorem]{Corollary}
\newtheorem{lemma}[theorem]{Lemma}
\newdefinition{remark}[theorem]{Remark}
\newproof{proof}{Proof}
\newcommand{\rank}{\mbox{\rm rank}}
\newcommand{\ba}{{\bf a}}
\newcommand{\bb}{{\bf b}}
\newcommand{\bh}{{\bf h}}
\newcommand{\bc}{{\bf c}}
\newcommand{\by}{{\bf y}}
\newcommand{\bx}{{\bf x}}
\newcommand{\bv}{{\bf v}}
\newcommand{\bo}{{\bf 0}}
\newcommand{\F}{\mathbb{F}}
\newcommand{\al}{\alpha}
\newcommand{\wt}{\operatorname{wt}}
\newcommand{\Aut}{\operatorname{Aut}}
\title{Completely regular codes with different parameters and the same distance-regular coset graphs}
\author[jr]{J. Rif\`{a}\corref{cor1}}
\ead{josep.rifa@uab.cat}
\author[vz]{V.A. Zinoviev}
\ead{zinov@iitp.ru}
\address[jr]{Department of Information and Communications Engineering,
Universitat Aut\`{o}noma de Barcelona, 08193-Bellaterra, Spain}
\address[vz]{Kharkevich Institute for Information Transmission Problems,
Russian Academy of Sciences, Bol'shoi Karetnyi per. 19, GSP-4,
Moscow, 127994, Russia}
\begin{document}

\begin{abstract}
A known Kronecker construction of completely regular codes has been investigated taking
different alphabets in the component codes. This approach is also connected
with lifting constructions of completely regular codes. We obtain
several classes of completely regular codes with different parameters,
but identical intersection array. Given a prime power $q$ and any two
natural numbers $a,b$, we construct completely transitive codes over
different fields with covering radius $\rho=\min\{a,b\}$ and  identical
intersection array, specifically, one code over $\F_{q^r}$ for each
divisor $r$ of $a$ or $b$. As a
corollary, for any prime power $q$, we show that distance
regular bilinear forms graphs can be obtained as coset graphs from
several completely regular codes with different parameters. Under
the same conditions, an explicit construction of an infinite family
of $q$-ary uniformly packed codes (in the wide sense) with
covering radius $\rho$, which are not completely regular, is also
given.
\end{abstract}
\begin{keyword} bilinear forms graph \sep completely regular code \sep completely transitive code \sep  coset graph \sep distance-regular graph \sep  distance-transitive graph \sep Kronecker product construction \sep lifting of a field \sep uniformly packed code
\end{keyword}

\maketitle

\section{Introduction}

Let $\F_q$ be a finite field of the order $q$ and
$\F_q^*=\F_q \setminus \{0\}$. A
$q$-ary linear code $C$ of length $n$ is a $k$-dimensional subspace of
$\F_q^n$. Given any
vector $\bv \in \F_q^n$, its {\em distance to the code
$C$} is $d(\bv,C)=\min_{\bx \in C}\{ d(\bv, \bx)\}$, the \textit{minimum distance} of the code is $d=\min_{\bv \in C} \{d(\bv, C\backslash \{\bv\})\}$
and the {\em covering radius} of the code $C$ is
$\rho=\max_{\bv \in \F_q^n} \{d(\bv, C)\}$. We say that $C$ is a $[n,k,d;\rho]_q$-code. Let $~D=C+\bx~$ be a
{\em coset} of  $C$, where $+$ means the component-wise
addition in $\F_q$. The {\em weight} $\wt(D)$
of $D$ is the minimum weight of the codewords of $D$.

For a given $q$-ary code $C$ with covering radius $\rho=\rho(C)$
define
\[
C(i)~=~\{\bx \in \F_q^n:\;d(\bx,C)=i\},\;\;i=1,2,...,\rho.
\]
Say that two vectors $\bx$ and $\by$ are {\em neighbors} if
$d(\bx,\by)=1$. For two vectors $\bx=(x_1,\ldots, x_n)$ and
$\by=(y_1, \ldots, y_n)$ over $\F_q$ denote by $\bx\by$ their
inner product over $\F_q$, i.e.\[ \bx\by = x_1y_1 + \ldots +
x_ny_n.
\]

\begin{definition}\cite{Neum}\label{de:1.1} A $q$ary code $C$ is completely regular, if
for all $l\geq 0$ every vector $x \in C(l)$ has the same number
$c_l$ of neighbors in $C(l-1)$ and the same number $b_l$ of
neighbors in $C(l+1)$. Define $a_l = (q-1)n-b_l-c_l$ and
$c_0=b_\rho=0$. Denote by $(b_0, \ldots, b_{\rho-1}; c_1,\ldots,
c_{\rho})$ the intersection array of $C$.
\end{definition}

The equivalent definition of completely regular codes is due to
Delsarte \cite{Del1}.

\begin{definition}\label{de:1.2}\cite{Del1}
A $q$-ary code $C$ with covering radius $\rho$ is called
{\em completely regular} if the weight distribution of any translated $D$
of $C$ of weight $i$, $i=0,1,...,\rho$ is uniquely defined by the
minimum weight of $D$, i.e. by the number $i=\wt(D)$.
\end{definition}

Let $M$ be a monomial matrix, i.e. a matrix with exactly one
nonzero entry in each row and column. If $q$ is prime, then
$\Aut(C)$ consist of all monomial ($n\times n$)-matrices $M$ over
$\F_q$ such that $\bc M \in C$ for all $\bc \in C$. If $q$ is a
power of a prime number, then $\Aut(C)$ also contains any field
automorphism of $\F_q$ which preserves $C$. The group $\Aut(C)$
acts on the set of cosets of $C$ in the following way: for all
$\sigma\in \Aut(C)$ and for every vector $\bv \in \F_q^n$ we have
$(\bv + C)^\sigma = \bv^\sigma + C$.

\begin{definition}\label{de:1.3}\cite{Giudici,Sole}
Let $C$ be a linear code over $\F_q$ with covering radius $\rho$.
Then $C$ is completely transitive if $\Aut(C)$ has $\rho +1$
orbits when acts on the cosets of $C$.
\end{definition}

Since two cosets in the same orbit should have the same weight
distribution, it is clear, that any completely transitive code is
completely regular.

\begin{definition}\label{de:1.4}\cite{Bas1}
Let $C$ be a $q$-ary code of length $n$ and let $\rho$ be its
covering radius. We say that $C$ is {\em uniformly packed} in the
wide sense, i.e. in the sense of \cite{Bas1}, if there exist
rational numbers $~\al_0,~\ldots,~\al_{\rho}~$ such that for any
$\bv~\in~\F_q^n~$ \EQ\label{eq:2.1}
\sum_{k=0}^{\rho}\al_k\,f_k(\bv)~=~1 , \EN where $f_k(\bv)$ is the
number of codewords at distance $k$ from $\bv$.
\end{definition}

Completely regular and completely transitive codes are classical subjects
in algebraic coding theory, which are closely connected with graph theory,
combinatorial designs and algebraic combinatorics. Existence, construction and
enumeration of all such codes are open hard problems (see \cite{Bro1,Neum,Gill,Kool}
and references there).

In a recent paper \cite{Rif2} we described an explicit
construction, based on the Kronecker product of parity check matrices,
which provides, for any natural number $\rho$ and for any prime
power $q$, an infinite family of $q$-ary linear completely regular
codes with covering radius $\rho$. In \cite{Rif3} we
presented another class of $q$-ary linear completely regular
codes with the same property, based on lifting of perfect codes.
Here we extend the Kronecker product construction
to the case when component codes have
different alphabets and connect the resulting completely regular
codes with codes obtained by lifting  $q$-ary perfect codes.
This gives several different infinite classes of completely
regular codes with different parameters and with
identical intersection arrays.

\section{Preliminary results}

For a $q$-ary $[n,k,d;\rho]_q$ code $C$ let $s = s(C)$ be its {\em
outer distance}, i.e. the number of different nonzero weights of
codewords in the dual code $C^{\perp}$.

\begin{lemma}\label{lem:2.1}
Let $C$ be a code with covering radius $\rho$ and external
distance $s$. Then,
\begin{enumerate}[i)]
\item \cite{Del1}~$\rho(C) \leq s(C)$.
\item \cite{Bas2}~The code $C$ is
uniformly packed in the wide sense if, and only if, $\rho=s$.
\item \cite{Bro1}~If $C$ is completely regular then it is
uniformly packed in the wide sense.
\end{enumerate}
\end{lemma}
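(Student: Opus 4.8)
The three parts are classical; the plan is to recall the standard arguments using the MacWilliams transform and the Delsarte–Lloyd framework. First I would set up notation: let $B_j$ denote the number of codewords of weight $j$ in the dual code $C^\perp$, so that the external distance $s$ is the number of indices $j\ge 1$ with $B_j\neq 0$, say $j\in\{j_1,\dots,j_s\}$. The key object is the set of Krawtchouk-type functions; for a coset $D=\bv+C$, the weight enumerator of $D$ is obtained from the $B_j$'s via the MacWilliams identity applied coset-wise, and the crucial point is that the ``dual weight distribution'' of $D$ is supported on the same $s+1$ coordinates $\{0,j_1,\dots,j_s\}$.

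For part (i), the argument I would give is the Delsarte linear-programming / annihilator-polynomial argument: one constructs the polynomial $\alpha(x)=\prod_{i=1}^{s}(x-j_i)$ of degree $s$; expanding $\alpha$ in the Krawtchouk basis and applying it to the dual distribution of any coset shows that a vector at distance $>s$ from $C$ would force a contradiction with positivity of the coset's weight distribution. Hence $\rho\le s$. For part (ii), I would recall that $C$ is uniformly packed in the wide sense exactly when there is a polynomial of degree $\rho$ (the $\alpha_k$'s of Definition~\ref{de:1.4} assemble into such a polynomial via the $f_k$) that annihilates the nonzero dual weights; combined with part (i), the existence of such a degree-$\rho$ annihilator is equivalent to $s\le\rho$, hence to $s=\rho$. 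Concretely, $\sum_k\alpha_k f_k(\bv)=1$ for all $\bv$ is, after the MacWilliams transform, the statement that a certain degree-$\rho$ polynomial takes value $0$ on each $j_i$ and a fixed nonzero value at $0$, which is solvable iff the $s$ nonzero weights impose at most $\rho$ independent linear conditions, i.e. $s\le\rho$.

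For part (iii), I would argue that complete regularity gives, for each weight class $C(l)$, the numbers $b_l,c_l$ of Definition~\ref{de:1.1}, and these assemble into a tridiagonal intersection matrix whose eigenvalues are the $\rho+1$ distinct values taken by the Krawtchouk functions on the dual weights; an eigenvector/recursion argument (the Lloyd-polynomial computation) produces an explicit polynomial of degree $\rho$ vanishing on all nonzero dual weights, which by part (ii) is exactly the uniform-packing condition. Alternatively, and more cheaply, I would invoke the coset-weight-distribution characterization (Definition~\ref{de:1.2}): complete regularity says the outer distribution matrix of $C$ has exactly $\rho+1$ distinct rows, and a rank/eigenvalue count on this matrix forces $s=\rho$, so part (ii) applies. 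The main obstacle, and the only place requiring care, is the eigenvalue bookkeeping in part (iii): one must verify that the tridiagonal matrix built from $(b_l,c_l)$ really has $\rho+1$ simple eigenvalues matching the dual weights, which is where the hypothesis ``same $b_l,c_l$ for every vector in $C(l)$'' is used in full strength; the other two parts are essentially a single application of a well-chosen annihilator polynomial.
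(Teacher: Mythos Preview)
The paper does not give a proof of Lemma~\ref{lem:2.1}; it simply records the three statements with citations to Delsarte~\cite{Del1}, Bassalygo--Zinoviev~\cite{Bas2}, and Brouwer--Cohen--Neumaier~\cite{Bro1} and uses them as black boxes. So there is no ``paper's own proof'' to compare your proposal against.

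That said, your sketch is a faithful summary of the standard arguments in those sources: the annihilator polynomial $\prod_i(x-j_i)$ expanded in the Krawtchouk basis for parts (i) and (ii), and either the tridiagonal intersection matrix or the rank of the outer distribution matrix for part (iii). One small tightening: in part (i) the contradiction is not really with ``positivity of the coset's weight distribution'' but rather with the identity $\sum_{k=0}^{s}\alpha_k f_k(\bv)=1$ that the degree-$s$ Lloyd polynomial forces for \emph{every} $\bv$; this directly gives $f_k(\bv)>0$ for some $k\le s$, hence $d(\bv,C)\le s$. Otherwise your outline is correct and is exactly what one would reconstruct from the cited references.
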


\begin{lemma}\cite{Neum}\label{lem:2.2}
Let $C$ be a linear completely regular code with intersection
array $(b_0, \ldots, b_{\rho-1}; c_1,\ldots, c_{\rho})$, and let
$\mu_i$ be the number of cosets of $C$ of weight $i$. Then
\[
\mu_ib_i = \mu_{i+1}c_{i+1}.
\]
\end{lemma}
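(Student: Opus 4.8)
The plan is to prove the identity by a double count of the edges of the Hamming graph $\F_q^n$ running between two consecutive distance levels $C(i)$ and $C(i+1)$.

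First I would record the elementary observation that, for a \emph{linear} code, the distance of a vector to $C$ equals the weight of the coset it determines: for $\bv\in\F_q^n$ one has $d(\bv,C)=\min_{\bc\in C}d(\bv,\bc)=\min_{\bc\in C}\wt(\bv-\bc)=\wt(\bv+C)$, using $C=-C$. Hence two vectors in the same coset lie at the same distance from $C$, and $C(i)$ is precisely the disjoint union of all cosets of $C$ of weight $i$. Since every coset has $|C|=q^k$ elements, this gives $|C(i)|=\mu_i\,q^k$ for each $i$, which is the only structural input needed.

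Next, I would count the number $N_i$ of ordered pairs $(\bx,\by)$ with $\bx\in C(i)$, $\by\in C(i+1)$ and $d(\bx,\by)=1$. By Definition~\ref{de:1.1}, every $\bx\in C(i)$ has exactly $b_i$ neighbors in $C(i+1)$, so $N_i=|C(i)|\,b_i=\mu_i q^k b_i$; symmetrically every $\by\in C(i+1)$ has exactly $c_{i+1}$ neighbors in $C(i)$, so $N_i=|C(i+1)|\,c_{i+1}=\mu_{i+1}q^k c_{i+1}$. Equating the two expressions and cancelling the common factor $q^k$ yields $\mu_i b_i=\mu_{i+1}c_{i+1}$. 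The boundary cases are covered by the conventions $c_0=b_\rho=0$ (and $\mu_i=0$ for $i>\rho$), for which the identity is trivial.

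The argument is essentially a one-line double count once the first step is in place, so there is no serious obstacle; the only point deserving care is the verification that the distance levels $C(i)$ decompose into entire cosets, which is exactly what lets the cardinalities $|C(i)|$ be expressed through the coset counts $\mu_i$ and makes the edge count between levels symmetric.
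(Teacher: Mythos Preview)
Your proof is correct. Note, however, that the paper does not supply its own proof of this lemma: it is stated with a citation to \cite{Neum} and used as a known result, so there is no argument in the paper to compare against. Your double-counting of edges between consecutive distance levels, together with the observation that $C(i)$ is the disjoint union of the $\mu_i$ cosets of weight $i$ (hence $|C(i)|=\mu_i|C|$), is precisely the standard derivation of this identity.
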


\begin{definition}\label{de:2.1}
For two matrices $A=[a_{r,s}]$ and $B =[b_{i,j}]$ over $\F_q$
define a new matrix $H$ which is the Kronecker product $H = A
\otimes B$, where $H$ is obtained by changing any element
$a_{r,s}$ in $A$ by the matrix $a_{r,s} B$.
\end{definition}

Consider the matrix $H = A \otimes B$ and let $C$, $C_A$ and $C_B$
be the codes over $\F_q$ which have, respectively, $H$, $A$ and
$B$ as parity check matrices. Assume that $A$ and $B$ have size
$m_a \times n_a$ and $m_b \times n_b$, respectively. For
$r\in\{1,\ldots, m_a\}$ and $s\in \{1,\ldots, m_b\}$ the rows in
$H$ look as
$$(a_{r,1}b_{s,1}, \ldots, a_{r,1}b_{s,n_b}, a_{r,2}b_{s,1}, \ldots,
a_{r,2}b_{s,n_b}, \ldots, a_{r,n_a}b_{s,1}, \ldots,
a_{r,n_a}b_{s,n_b}).$$ Arrange these rows taking blocks of $n_b$
coordinates as columns such that the codewords in code $C$ are
presented as matrices $\left[\bc\right]$ of size $n_b \times n_a$:
\EQ\label{eq:2.0}
\left[\bc\right]= \left[
\begin{array}{ccc}
c_{1,1} & \ldots & c_{1,n_a}\\
c_{2,1} & \ldots & c_{2,n_a}\\
\vdots &\vdots &\vdots \\
c_{n_b,1} & \ldots & c_{n_b,n_a}
\end{array}
\right] = \left[
\begin{array}{ccc}
&\bc_1 & \\
&\bc_2 & \\
&\vdots &\\
&\bc_{n_b} &
\end{array}
\right] = \left[
\bc^{(1)}\,\bc^{(2)}\,\ldots\,\bc^{(n_a)}
\right],
\EN
where $c_{i,j} = a_{r,j} b_{s,i}$,\;$\bc_r$ is the
$r$th row vector of the matrix $C$ and $\bc^{(\ell)}$ is its
$\ell$th column.

The following result was obtained in \cite{Rif2}.

\begin{theorem}\label{theo:2.1}
Let $C(H)$ be the $[n,k,d;\rho]_q$ code with parity check matrix
$H=A\otimes B$ where $A$ and $B$ are parity check matrices of
Hamming $[n_a,k_a,3]_q$ and  $[n_b,k_b,3]_q$ codes, $C_A$ and $C_B$, respectively,
where $n_a = (q^{m_a}-1)/(q-1) \geq 3$,~ $n_b=(q^{m_b}-1)/(q-1)
\geq 3$,~ $k_a=n_a-m_a$, $k_b=n_b-m_b$ and where
\[
n=n_an_b,\;\;k= n - m_am_b,\;\;d=3,\;\; \rho=\min\{m_a,m_b\}.
\]
Then the code $C$ is completely transitive and, therefore, completely
regular with covering radius $\rho=\min\{m_a,m_b\}$ and
intersection numbers
\begin{equation}\label{eq:2.100}
\begin{split}
b_\ell = \frac{(q^{m_a}-q^\ell)(q^{m_b}-q^\ell)}{(q-1)},\;\;\ell= 0,\dots, \rho-1,\;\;\\
c_\ell =  q^{\ell-1} \frac{q^\ell-1}{q-1},\;\;\ell= 1,\dots,
\rho.
\end{split}
\end{equation}
\end{theorem}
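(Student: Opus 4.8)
The plan is to establish complete transitivity directly, since that immediately yields complete regularity (as noted after Definition~\ref{de:1.3}), and then to extract the intersection numbers by a counting argument based on coset weights. The key structural observation is the matrix picture \eqref{eq:2.0}: a syndrome of $H=A\otimes B$, i.e.\ a vector in the column space of $H$, is naturally a matrix of size $m_a\times m_b$ (the syndromes of $C$ live in $\F_q^{m_am_b}$, arranged as $m_a\times m_b$ arrays), and it can be written as $S=A\,[\bc]\,B^{\scrT}$ type products. First I would identify the weight of a coset $\bv+C$ with an invariant of its syndrome: because $C_A$ and $C_B$ are Hamming codes (covering radius $1$, every nonzero syndrome realized by a weight-one vector), one shows that the syndrome of a coset of weight $\ell$ is exactly a rank-$\ell$ matrix over $\F_q$, and conversely every rank-$\ell$ matrix arises as such a syndrome. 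This is the heart of the connection to bilinear forms and is the step I expect to be the main obstacle: one must prove that the minimum Hamming weight of a coset equals the $\F_q$-rank of the corresponding $m_a\times m_b$ syndrome matrix. The inequality $\wt\le\rank$ comes from writing a rank-$\ell$ matrix as a sum of $\ell$ rank-one matrices $u_i v_i^{\scrT}$ and realizing each via a single nonzero coordinate (using that $A$, $B$ are parity checks of Hamming codes, every column direction is available); the reverse inequality $\rank\le\wt$ is immediate since a weight-$\ell$ vector $[\bc]$, viewed through the block structure, produces a syndrome $A[\bc]B^{\scrT}$ that is a sum of $\ell$ rank-one terms.

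Next I would handle complete transitivity. The automorphism group of $C$ contains the subgroup induced by pairs $(P,Q)$ where $P\in GL_{m_a}(\F_q)$ permutes the columns of $A$ projectively (i.e.\ acts as a monomial transformation on the $n_a$ coordinates coming from $C_A$) and similarly $Q$ for $B$; concretely $\Aut(C_A)\cong PGL_{m_a}(q)$ acting on the Hamming code, and likewise for $C_B$, and the Kronecker structure lets these act on $C(H)$ so that on syndromes they act by $S\mapsto PSQ^{\scrT}$. Since $GL_{m_a}(\F_q)\times GL_{m_b}(\F_q)$ acts transitively on rank-$\ell$ matrices of size $m_a\times m_b$ for each fixed $\ell=0,1,\dots,\rho$ (the Smith-normal-form / rank-canonical-form argument, $\rho=\min\{m_a,m_b\}$), the induced action of $\Aut(C)$ on the set of cosets has exactly one orbit for each weight value $\ell$, hence $\rho+1$ orbits in total. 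This gives complete transitivity, and therefore complete regularity by the remark following Definition~\ref{de:1.3}; the value $\rho=\min\{m_a,m_b\}$ also drops out, being the maximal possible rank.

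Finally I would compute the intersection numbers. Having identified cosets of weight $\ell$ with rank-$\ell$ matrices, the number $\mu_\ell$ of such cosets is the classical Gaussian count of rank-$\ell$ matrices of size $m_a\times m_b$ over $\F_q$. Then $b_\ell$ and $c_\ell$ follow from Lemma~\ref{lem:2.2} together with one more direct count: given a fixed rank-$\ell$ syndrome $S$, $b_\ell$ is the number of coordinates-and-values (there are $(q-1)n$ of them) whose associated rank-one syndrome change increases the rank to $\ell+1$, and $c_{\ell+1}$ counts those that decrease it. Equivalently one can verify \eqref{eq:2.100} by checking the recursion $\mu_\ell b_\ell=\mu_{\ell+1}c_{\ell+1}$ of Lemma~\ref{lem:2.2} against the proposed closed forms and pinning down $c_\ell=q^{\ell-1}(q^\ell-1)/(q-1)$ from the local structure at a weight-$\ell$ coset; the formula $b_\ell=(q^{m_a}-q^\ell)(q^{m_b}-q^\ell)/(q-1)$ is then forced. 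A cleaner route, which I would actually write up, is to note that the coset graph of $C$ is by construction the bilinear forms graph $H_q(m_a,m_b)$ — vertices the $m_a\times m_b$ matrices, adjacency when the difference has rank one — whose intersection array is known and equals \eqref{eq:2.100}; complete regularity of $C$ is then equivalent to distance-regularity of this graph, which is classical. This reduces the whole theorem to the rank-equals-coset-weight lemma plus the transitivity of $GL\times GL$ on fixed-rank matrices, and I would organize the proof around exactly those two facts.
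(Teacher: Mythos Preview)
Your proposal is correct and follows essentially the same approach as the paper: identify syndromes of $C$ with $m_a\times m_b$ matrices over $\F_q$, show that coset weight equals matrix rank, obtain complete transitivity from the transitive action of $GL_{m_a}(\F_q)\times GL_{m_b}(\F_q)$ (realized through $\Aut(C_A)\times\Aut(C_B)$) on matrices of fixed rank, and then read off the intersection numbers by rank counting and Lemma~\ref{lem:2.2}. Note that Theorem~\ref{theo:2.1} is only quoted here from \cite{Rif2}; the argument to compare against is the proof of the generalization Theorem~\ref{theo:4.1}, which proceeds exactly along your lines (computing $b_\ell,c_\ell$ directly rather than citing the bilinear-forms graph, but that is the same calculation repackaged).
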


\begin{definition}
Let $C$ be the $[n,k,d]_q$ code with parity check matrix $H$
where $1\leq k \leq n-1$ and $d\geq 3$. Denote by $C_{r}$ the
$[n,k,d]_{q^r}$ code over $\F_{q^r}$ with the same parity check
matrix $H$. Say that code $C_r$ is obtained by lifting $C$ to $\F_{q^r}$.
\end{definition}

In \cite{Rif3} we proved the following result

\begin{theorem}\label{theo:2.2}
Let $C_r(H_m^q)$ be the $[n,n-m,3;\rho]_{q^r}$ code of length $n =
(q^m-1)/(q-1)$ over the field $\F_{q^r}$ obtained by lifting a
$q$-ary perfect $[n,n-m,3]_q$ code $C(H^q_m)$ with parity check
matrix $H_m^q$. Then, code $C_r(H_m^q)$ is completely regular
with covering radius $\rho=\min\{m,r\}$ and intersection numbers
given by (\ref{eq:2.100}) taking $m_a=m$ and $m_b=r$.
\end{theorem}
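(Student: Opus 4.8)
The plan is to show that the lifted code $C_r(H_m^q)$ is, up to a monomial equivalence, exactly the Kronecker-product code of Theorem \ref{theo:2.1} with $m_a = m$ and $m_b = r$ (equivalently $n_b = (q^r-1)/(q-1)$), and then quote Theorem \ref{theo:2.1} for the intersection numbers. The key observation is that working over $\F_{q^r}$ with the parity check matrix $H_m^q$ is the same as working over $\F_q$ after expanding each coordinate of $\F_{q^r}$ into $r$ coordinates of $\F_q$ via a fixed $\F_q$-basis of $\F_{q^r}$. Under this expansion, a single parity check over $\F_{q^r}$ with coefficient $h \in \F_q$ becomes $r$ parity checks over $\F_q$; more precisely, if $B$ is the $r \times n_b$ parity-check matrix of a $q$-ary Hamming code of redundancy $r$ (so $n_b = (q^r-1)/(q-1)$), then the $\F_q$-linear map ``multiplication by a nonzero scalar of $\F_{q^r}$'' acting on the expanded coordinates is described exactly by the block structure governed by $B$. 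This is precisely how the matrix $H = H_m^q \otimes B$ arises, so the $\F_q$-expansion of $C_r(H_m^q)$ is (monomially equivalent to) $C(H_m^q \otimes B)$.

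First I would fix an $\F_q$-basis $\{\beta_1,\dots,\beta_r\}$ of $\F_{q^r}$ and set up the coordinate-expansion map $\phi \colon \F_{q^r}^n \to \F_q^{rn}$, verifying it is an $\F_q$-linear bijection that preserves the property of being at a given Hamming distance from the code in the sense relevant to complete regularity — more carefully, since $\phi$ changes individual symbol weights, I would instead argue at the level of cosets and their \emph{syndromes}. The syndrome of a vector $\bv \in \F_{q^r}^n$ under $H_m^q$ lies in $\F_{q^r}^m$; expanding via the basis identifies $\F_{q^r}^m \cong \F_q^{rm}$, and the redundancy matches that of $C(H_m^q \otimes B)$, whose syndromes lie in $\F_q^{m_a m_b} = \F_q^{mr}$. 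The core computational step is to check that the coset structure — which syndromes are achievable at which weight, and with what multiplicity — is identical on both sides, i.e. that the weight of a coset of $C_r(H_m^q)$ equals the weight of the corresponding coset of $C(H_m^q \otimes B)$. Once this bijection of cosets respecting weight is in place, the intersection array is a function of the coset-weight partition alone (Definition \ref{de:1.2}, together with Lemma \ref{lem:2.2} to pin down the $\mu_i$), so $C_r(H_m^q)$ inherits complete regularity and the numbers (\ref{eq:2.100}) with $m_a = m$, $m_b = r$ from Theorem \ref{theo:2.1}; in particular $\rho = \min\{m,r\}$.

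An alternative, possibly cleaner route avoids Theorem \ref{theo:2.1} and computes directly: a coset $C_r(H_m^q) + \bv$ of weight $\ell$ corresponds to a syndrome vector in $\F_{q^r}^m$ spanning an $\F_q$-subspace of some dimension, and one shows the achievable syndromes of weight exactly $\ell$ (meaning: the minimum number of columns of $H_m^q$, with coefficients in $\F_{q^r}$, summing to it) are governed by $\ell$-dimensional $\F_q$-subspaces inside $\F_{q^r}$-spans of columns; counting these with the Gaussian-binomial bookkeeping yields $\mu_\ell$, and then $b_\ell, c_\ell$ follow from Lemma \ref{lem:2.2} and a short local count. The main obstacle I anticipate is the bookkeeping in this second step: making precise the claim ``weight of a coset over $\F_{q^r}$ = weight of the matching coset over $\F_q$,'' because the expansion map does not preserve Hamming weight symbol-by-symbol, so one must track how a minimum-weight representative over $\F_{q^r}$ (a choice of few columns of $H_m^q$ with $\F_{q^r}$-coefficients) maps to a minimum-weight configuration in the blocked $\F_q$-picture, where each nonzero block contributes according to the Hamming code $C_B$. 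Establishing this correspondence carefully — essentially that an $\ell$-weight coset over $\F_{q^r}$ uses $\F_q$-coefficients spanning an $\ell$-dimensional space, matching the $\rho = \min\{m,r\}$ bound — is the crux; everything after it is routine substitution into (\ref{eq:2.100}).
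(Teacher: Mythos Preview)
The paper does not prove Theorem~\ref{theo:2.2} here; it is quoted from \cite{Rif3}. What the paper does provide is the explanatory passage around equations (\ref{eq:2.2})--(\ref{eq:2.8}), which identifies the syndrome equations of the lifted code with those of the Kronecker code at the level of $\F_q^{mr}$, and explicitly observes that ``the corresponding codes are different (since they have different lengths)''.

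That last remark is exactly where your main approach goes wrong. The $\F_q$-expansion of $C_r(H_m^q)$ via a basis of $\F_{q^r}$ produces a $q$-ary code of length $rn$, and its parity check matrix is $H_m^q \otimes I_r$, not $H_m^q \otimes H_r^q$. The Kronecker code of Theorem~\ref{theo:2.1} has length $n\cdot n_b = \dfrac{q^m-1}{q-1}\cdot\dfrac{q^r-1}{q-1} \neq rn$ in general, so no monomial equivalence is possible, and since $I_r$ is not the parity check matrix of a Hamming code you cannot invoke Theorem~\ref{theo:2.1} for the expanded code either. The sentence about ``multiplication by a nonzero scalar of $\F_{q^r}$'' being described by $B=H_r^q$ does not rescue this: those multiplications are $r\times r$ matrices, whereas $B$ is $r\times n_b$, and there is no natural way to turn one into the other that yields a code equivalence.

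What does survive, and what the paper's discussion and the proof in \cite{Rif3} actually use, is your fallback at the syndrome level. The syndromes of $C_r(H_m^q)$ lie in $\F_{q^r}^m \cong \F_q^{m\times r}$, the syndromes of $C(H_m^q\otimes H_r^q)$ lie in $\F_q^{m\times r}$ via (\ref{eq:2.7})--(\ref{eq:2.8}), and under this identification the weight of a coset is in both cases the \emph{rank} of the corresponding $m\times r$ matrix over $\F_q$. This rank interpretation is the missing key step you allude to but do not state; once it is established, the $b_\ell$ and $c_\ell$ come from counting rank-$(\ell\pm 1)$ matrices differing from a given rank-$\ell$ matrix in a single row (resp.\ column), exactly as in the computation inside the proof of Theorem~\ref{theo:4.1}. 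Your ``alternative, possibly cleaner route'' is therefore the correct one, and it is essentially how \cite{Rif3} proceeds; but the bridge you need is ``coset weight $=$ matrix rank'', not a code-level equivalence to the Kronecker construction.
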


From the above theorems we are giving completely regular codes
with different parameters and over different alphabets but with
the same intersection arrays.
Now our purpose is to consider deeper this coincidence.

Let $H^q_m = [\bh^{(1)}|\cdots |\bh^{(n)}]$ be a parity check matrix for the $q$-ary perfect
$[n,n-m,3]_q$ code $C=C(H^q_m)$.
Any codeword $\bv$ of $C$ is defined by the following equation
\EQ\label{eq:2.2} v_1\,\bh^{(1)} + v_2\,\bh^{(2)} + \ldots + v_n\,\bh^{(n)} =
0,\;\;v_i\in \F_q. \EN

A codeword $\bv$ of the lifted code $C_r(H^q_m)$ is defined by the same equation
\EQ\label{eq:2.3} v_1\,\bh^{(1)} + v_2\,\bh^{(2)} + \ldots + v_n\,\bh^{(n)} =
0, \EN
with the only difference that the unknown elements $v_i$ belong to
$\F_{q^r}$. Since any element $v_i$ of $\F_{q^r}$ can be presented as a vector
$\bv_i = (v_{i,1}, v_{i,2}, \ldots, v_{i,r})$ of
length  $r$ over $\F_q$, the equation (\ref{eq:2.3}) is transformed to the
system of equations
\EQ\label{eq:2.4} v_{1,\ell}\,\bh^{(1)} + v_{2,\ell}\,\bh^{(2)} + \dots + v_{n,\ell}\,\bh^{(n)} =
0,\;\;\ell=1,\ldots,r. \EN
Since for any $\ell$,\, $\ell\in \{1,\ldots,r\}$, the solutions $(v_{1,\ell}, v_{2,\ell}, \ldots, v_{n,\ell})$
of (\ref{eq:2.4}) are also solutions of (\ref{eq:2.2}), we conclude that (\ref{eq:2.3})
has $\left(q^{n-m}\right)^r = q^{(n-m)r}$ solutions. Indeed, matrix $H^q_m$ has
$n-m$ linearly independent rows.

Now, consider a code $D$ obtained by the Kronecker product, i.e. $D$ has
a parity check matrix $H=A\otimes B$. So, column vectors of $H$ can be seen as $\ba^{(i)}\otimes \bb^{(j)}$. Assume that $A$ is of
size $m \times n$ and $B$ is of size $m_b \times n_b$, where both matrices
are over $\F_q$. So, a codeword  $\bv$ of $D$
\[
\bv = (\bv_1, \bv_2, \ldots, \bv_{n}),\;\;\bv_i=(v_{i,1}, \ldots, v_{i,n_b}),
\]
is defined by the equation
\EQ\label{eq:2.5} \sum_{i=1}^{n} \ba^{(i)}\sum_{j=1}^{n_b} v_{i,j}\,\bb^{(j)}=0\,,
\EN
or by the system of equations
\EQ\label{eq:2.6} \sum_{i=1}^{n} \ba^{(i)}\sum_{j=1}^{n_b} v_{i,j}b^{(j)}_s=0,\;\;
s=1,\ldots, m_b,
\EN
which can be rewritten as follows:
\EQ\label{eq:2.7}  w_{1,s}\,\ba^{(1)} + w_{2,s}\,\ba^{(2)}
+ \ldots + w_{n,s}\,\ba^{(n)}=0,\;\;
s=1,\ldots, m_b,
\EN
where
\EQ\label{eq:2.8}
w_{i,s} = \sum_{j=1}^{n_b} v_{i,j}b^{(j)}_s.
\EN
Now the following observation gives an explanation of the coincidence
of the intersection numbers of both constructions:
{\em when vector $(v_{i,1}, v_{i,2}, \ldots, v_{i,n_b})$ in (\ref{eq:2.8}) runs over all
possible values in $\F_q^{n_b}$, vector $(w_{i,1},w_{i,2}, \ldots, w_{i,m_b})$
runs over all possible values in $\F_q^{m_b}$.} It is so, because the matrix
$B$ is of size $m_b \times n_b$ and has rank $m_b$. Hence the linear system
(\ref{eq:2.8}) defines an homomorphism of vectors from $\F_q^{n_b}$ onto
$\F_q^{m_b}$, whose kernel has cardinality $q^{n_b-m_b}$.

Compare the equations (\ref{eq:2.4}) and (\ref{eq:2.7}). If $r=m_b$ they look
identically. So, for the graphs defined by the coset graphs we
have the same conditions. But the corresponding codes are different (since they
have different lengths). The difference is that in (\ref{eq:2.7}) the variables $w_{i,s}$
are not codewords of $D$, but define the codewords through the linear system
(\ref{eq:2.8}).  Hence, from the point of view of the shape of parity check
equations (in both cases, the columns, $\bh^{(i)}$ and $\ba^{(i)}$, are all
linearly independent vectors of
corresponding lengths), the Kronecker product construction can be considered as a special type of the lifting construction. This is an explanation
why the intersection arrays of different completely regular codes from different
constructions coincide (compare Theorems \ref{theo:2.1} and \ref{theo:2.2}).

\section{Extending the Kronecker product construction}

Recall that by $C(H)$ we denote the code defined by
the parity check matrix $H$, by $H^q_m$ we denote the parity check matrix
of the $q$-ary Hamming $[n,n-m,3]_q$ code $C=C(H^q_m)$ of length $n=(q^m-1)/(q-1)$,
and by $C_r(H^q_m)$ we denote the code (of the same length $n=(q^m-1)/(q-1)$)
obtained by lifting $C(H^q_m)$ to the field $\F_{q^r}$.

Considering the above Kronecker construction (Theorem \ref{theo:2.1}) we could see that the
alphabets of both matrices $A=[a_{i,j}]$ and $B$ should be
compatible to each other in the sense that the multiplication $a_{i,j}B$
can be carried out. To have this compatibility it is enough that, say, the matrix
$A$ is over $\F_{q^u}$ and $B$ is over $\F_q$. First, we consider
the covering radius of the resulting codes.

\begin{lemma}\label{lem:4.1}
Let $C(H^{q^u}_{m_a})$ and $C(H^{q}_{m_b})$ be two Hamming codes
with parameters $[n_a,n_a-m_a,3]_{q^u}$ and $[n_b,n_b-m_b,3]_q$,
respectively, where $n_a = (q^{u\,m_a}-1)/(q^u-1)$,~
$n_b=(q^{m_b}-1)/(q-1)$,~ $q$ is a prime power,~ $m_a, m_b
\geq 2$, and $u \geq 1$. Then the code $C$ with parity check
matrix $H = H^{q^u}_{m_a} \otimes H^q_{m_b}$, the Kronecker
product of $H^{q^u}_{m_a}$ and $H^q_{m_b}$, has covering
radius $\rho = \min\{u\,m_a,\,m_b\}$.
\end{lemma}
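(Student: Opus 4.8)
The plan is to compute the covering radius by analysing the syndrome map of $H = H^{q^u}_{m_a}\otimes H^q_{m_b}$ directly, following the reduction already carried out in Section~2. Using the block-column presentation, a coset of $C$ with syndrome data is described (cf.\ equations (\ref{eq:2.7})--(\ref{eq:2.8})) by a collection of ``effective syndromes'' $w_{i,s}$, which via the rank-$m_b$ matrix $H^q_{m_b}$ range over all of $\F_{q^u}^{m_b}$ when the true variables run over $\F_{q^u}^{n_b}$; equivalently, after tensoring, the syndrome of a vector $\bv$ lives in the space $\F_{q^u}^{m_a}\otimes\F_q^{m_b}$, which as an $\F_q$-space has dimension $u\,m_a\,m_b$. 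So the first step is to set up this correspondence carefully in the mixed-alphabet setting and identify the covering radius of $C$ with the covering radius of a suitable ``$q$-ary rank-type'' problem attached to the pair $(H^{q^u}_{m_a}, H^q_{m_b})$.

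Second, I would obtain the lower bound $\rho \ge \min\{u\,m_a,\,m_b\}$ by exhibiting a coset of large weight. The natural candidate is built by analogy with the $\min\{m_a,m_b\}$ case of Theorem~\ref{theo:2.1} and the $\min\{m,r\}$ case of Theorem~\ref{theo:2.2}: pick a syndrome pattern that forces many distinct columns $\ba^{(i)}\otimes\bb^{(j)}$ (equivalently many nonzero rows/columns in the $n_b\times n_a$ array $[\bc]$) to be used, using that any two columns of a Hamming matrix are linearly independent. Here one must use the $\F_q$-linear structure: writing $\F_{q^u}$ as an $r$-dimensional $\F_q$-space with $r=u$, the columns of $H^{q^u}_{m_a}$ spread out into $u\,m_a$ independent $\F_q$-directions, so the effective ``depth'' available is $u\,m_a$ on the $A$-side and $m_b$ on the $B$-side, and a greedy/inductive construction of a non-correctable syndrome reaches depth $\min\{u\,m_a,\,m_b\}$.

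Third, for the upper bound $\rho \le \min\{u\,m_a,\,m_b\}$, I would show every syndrome is realised by a vector of weight at most $\min\{u\,m_a,\,m_b\}$. One route: reduce to the already-established cases. If $m_b \le u\,m_a$, view $C$ as (essentially) the code obtained by applying the Kronecker/lifting machinery with the roles arranged so that the ``short side'' has parameter $m_b$, and invoke Theorem~\ref{theo:2.1} or Theorem~\ref{theo:2.2} after identifying $\F_{q^u}$-linear combinations with $\F_q$-linear ones; if $u\,m_a \le m_b$, do the symmetric thing with the roles of the two factors (and of $u m_a$ vs $m_b$) interchanged, using that $H^{q^u}_{m_a}$ lifted/expanded over $\F_q$ behaves like a parity-check matrix of ``effective redundancy'' $u\,m_a$. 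Concretely, an element of the syndrome space can be written using at most $t$ columns of $H$ precisely when an associated bilinear-forms / rank condition of size $\le t$ holds, and the Hamming (all columns pairwise independent) property guarantees the required columns are available as long as $t \le \min\{u\,m_a,\,m_b\}$.

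The main obstacle I expect is the bookkeeping for the mixed alphabets: the $A$-side is over $\F_{q^u}$ and the $B$-side over $\F_q$, so the tensor product $\F_{q^u}^{m_a}\otimes_{?}\F_q^{m_b}$ must be interpreted consistently (as an $\F_q$-module of $\F_q$-dimension $u\,m_a\,m_b$, with scalar multiplication by $\F_{q^u}$ acting only on the first factor), and one has to check that the ``effective syndrome'' $w_{i,s}\in\F_{q^u}$ still ranges over everything and that the weight of a realising vector is governed by the number of block-columns used, now counted with the correct $\F_q$/$\F_{q^u}$ multiplicities. Once that dictionary is fixed, both bounds should follow by the same column-independence arguments used for Theorems~\ref{theo:2.1} and~\ref{theo:2.2}, with $m_a$ replaced by $u\,m_a$ throughout on the first factor.
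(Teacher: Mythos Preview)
Your plan is sound in spirit and would eventually yield the result, but it takes a considerably more elaborate route than the paper does, and one of your proposed shortcuts does not work as stated.

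The paper's proof is entirely direct and uses no reduction to Theorems~\ref{theo:2.1} or~\ref{theo:2.2}. For the upper bound it simply writes a syndrome $\bx\in(\F_{q^u})^{m_a m_b}$ in two ways. First, grouping into $m_a$ blocks of length $m_b$: each block lies in $(\F_{q^u})^{m_b}$, and by expanding over an $\F_q$-basis of $\F_{q^u}$ it is an $\F_{q^u}$-linear combination of at most $u$ columns of $H^q_{m_b}$, giving $\rho\le u\,m_a$. Second, after the obvious row permutation, grouping into $m_b$ blocks of length $m_a$: each block lies in $(\F_{q^u})^{m_a}$ and is a single column of $H^{q^u}_{m_a}$ up to an $\F_{q^u}$-scalar, giving $\rho\le m_b$. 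The lower bound is then asserted by exhibiting syndromes for which these decompositions are tight. No syndrome/rank formalism and no appeal to earlier theorems is needed.

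Your proposed reduction ``invoke Theorem~\ref{theo:2.1} or Theorem~\ref{theo:2.2} after identifying $\F_{q^u}$-linear combinations with $\F_q$-linear ones'' has a real obstacle: when you expand $H^{q^u}_{m_a}$ over $\F_q$ you get a $(u\,m_a)\times n_a$ matrix over $\F_q$, but with only $n_a=(q^{u m_a}-1)/(q^u-1)$ columns, \emph{not} $(q^{u m_a}-1)/(q-1)$; it is not a $q$-ary Hamming parity-check matrix, so the hypotheses of Theorem~\ref{theo:2.1} are not met and the reduction is not immediate. Your alternative ``bilinear-forms / rank condition'' route is correct (and is in fact what the paper uses later, in the proof of Theorem~\ref{theo:4.1}, to compute the intersection numbers), but for the bare covering-radius statement it is heavier machinery than needed. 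The paper's two-line block decomposition already does the job.
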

\begin{proof}
Assume that the matrices $H$, $H^{q^u}_{m_a}$, and $H^q_{m_b}$
have columns $\bh_i$, $\ba_j$, and $\bb_s$, respectively, i.e.
they look as
\[
H = [\bh_1 | \cdots | \bh_n],\;\;H^{q^u}_{m_a} = [\ba_1 | \cdots |
\ba_{n_a}],\;\;H^{q}_{m_b} = [\bb_1 | \cdots | \bb_{n_b}].
\]
We have to prove that any column vector $\bx \in (\F_{q^u})^{m_am_b}$
can be presented as a linear combination of not more than $\rho$
columns of $H$.

By construction the column $\bh_i$ looks as
\[
\bh_i^T = [a_{1,j}\bb_s, a_{2,j}\bb_s, \ldots, a_{m_a,j}\bb_s],
\]
where $i=1, \ldots, n$, $n=n_an_b$, $j=1, \ldots, n_a$ $s=1, \ldots, n_b$ and $\bh_i^T$ is the transposed vector of $\bh_i$.
By definition, the matrix $H^q_{m_b}$ contains as column vectors
any vector $\by\in (\F_q)^{m_b}$ over the ground field $\F_q$, up to multiplication by scalars of $\F_q$. But, vectors $\bx$ are arbitrary vectors
over the extended field $\F_{q^u}$. Any such vector can be presented as a linear
combination of $u$ or less vectors from $H^q_{m_b}$. Hence for any choice of
$\bx=(\bx_1,\ldots,\bx_{m_a})$
we can always take not more than $u\,m_a$ columns of $H$ to have
$m_a$ equalities of the type
\[
\bx_i = \sum_{s=1}^u \alpha_s \bb_{i_s},\;\;i=1, \ldots, m_a,\, \alpha_i \in \F_{q^u},
\]
implying that
\[
\rho \leq u \, m_a.
\]
From the other side, by permuting the rows of $H$, the column
$\bh_i$ can be presented (with other index, say, $i'$) as follows:
\[
{\bh^T_{i'}} = [b_{1,s}\ba_j, b_{2,s}\ba_j, \ldots,
b_{m_b,s}\ba_j].
\]
Since vector $\ba_j$ is over the extended field $\F_{q^u}$, we can choose as
$\by\in (\F_{q^u})^{m_a}$ (a component of $\bx$) a vector $\F_{q^u}$, which can be presented only as some
vectors $\ba_j$, up to scalar, giving that
\[
\rho \leq  m_b.
\]
Since in both cases the bounds can be reached by appropriate choices of vector $\bx$,
we obtain the result.\qed
\end{proof}
We give also several simple facts from \cite{Rif2,Rif3}, which
will be used in the proof of the forthcoming theorem. As we said
before (\ref{eq:2.0}), any codeword $\bc\in C$ can be seen as a ($n_b\times
n_a$)-matrix $\left[\bc\right]$.

For a codeword $[\bc]$ define its syndrome, which, in a
matrix representation, is a $(m_b\times m_a)$ matrix
$S_c = [(A\otimes B)\bc^T]$ which is equal to zero. We have
\[
S_c = [(A\otimes B)\bc^T] = B [\,\bc\,] A = 0.
\]

From the equality above we see that any ($n_b\times n_a$)-matrix having
codewords of $C(A)$ as rows belongs to the code $C$, and any ($n_b\times
n_a$)-matrix with codewords of $C(B)$ as columns also belongs to
the code $C$. And vice versa, all the codewords in $C$ can always
be seen as linear combinations of matrices of the both types
above.

Fix a $1-1$ mapping $\mu$ from $\F_{q^u}$ to $(\F_q)^u$ writing for
any element $a \in \F_{q^u}$ its $\F_q$-presentation $\mu(a)$:
\[
\mu(a)=[a_0,a_1, \ldots, a_{u-1}]\;\;\longleftrightarrow\;\;a=\sum_{i=0}^{u-1}a_i\mu_i,
\]
where $\mu_0, \mu_1, \ldots, \mu_{u-1}$ is a fixed basis in
$\F_{q^u}$ over $\F_q$. Finally, extend the map $\mu$ to vectors
$\bv = (v_1, \ldots, v_n) \in (\F_{q^u})^n$ by the obvious way:
$\mu(\bv) = [\mu(v_1)\,|\, \cdots \,|\,\mu(v_n)]$.

\begin{definition}\label{de:4.1}
Given a vector $\bv\in (\F_{q^u})^n$, with syndrome $S_v$, which
is a $(m_b \times m_a)$ matrix over $\F_{q^u}$
denote by $\mu_v$ the $(m_b \times (u m_a))$
matrix obtained from $S_v$ using the map $\mu$ in its rows.
\end{definition}

We have the following three simple facts, which we formulate
into the next lemmas.

\begin{lemma}\label{lem:4.2}
Let $\bx,\by\in (\F_{q^u})^n$ be two vectors with syndromes $S_x$ and
$S_y$ and corresponding matrices $\mu_x$ and $\mu_y$, respectively. The equality
\[
(\mu_x)^T K = \mu_y,
\]
for any non-singular $m_b \times m_b$ matrix over $\F_q$ implies
the equality
\[
(S_x)^T K = S_y
\]
and vice versa.
\end{lemma}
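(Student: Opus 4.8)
The plan is to exploit the fact that the $\mu$-encoding acts row-by-row and $\F_q$-linearly on the rows of a syndrome matrix, and that multiplication on the left by a matrix $K$ over the ground field $\F_q$ commutes with that encoding. First I would write things out in coordinates. Let $S_x$ be an $m_b\times m_a$ matrix over $\F_{q^u}$ with rows $\bs_1,\dots,\bs_{m_b}$ (each $\bs_i\in(\F_{q^u})^{m_a}$), so that by Definition~\ref{de:4.1} the $i$th row of $\mu_x$ is $\mu(\bs_i)\in(\F_q)^{um_a}$, where $\mu$ is applied componentwise. Transposing, the columns of $(S_x)^T$ are the $\bs_i$ and the columns of $(\mu_x)^T$ are the $\mu(\bs_i)$; similarly for $y$. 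If $K=[k_{ij}]$ is an $m_b\times m_b$ matrix over $\F_q$, then the $j$th column of $(S_x)^TK$ is $\sum_{i=1}^{m_b}k_{ij}\bs_i$, and the $j$th column of $(\mu_x)^TK$ is $\sum_{i=1}^{m_b}k_{ij}\mu(\bs_i)$.

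The key step is the identity $\mu\!\left(\sum_i k_{ij}\bs_i\right)=\sum_i k_{ij}\,\mu(\bs_i)$ for scalars $k_{ij}\in\F_q$. This holds componentwise: for a single field element $a=\sum_{\ell=0}^{u-1}a_\ell\mu_\ell$ and $k\in\F_q$ we have $ka=\sum_\ell (ka_\ell)\mu_\ell$, so $\mu(ka)=k\,\mu(a)$ because $ka_\ell\in\F_q$; and $\mu$ is additive since the basis expansion is unique. Therefore $\mu$ is an $\F_q$-linear map $(\F_{q^u})^{m_a}\to(\F_q)^{um_a}$, and it commutes with any $\F_q$-linear combination of rows. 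Hence, column by column, $(\mu_x)^TK=\mu_y$ if and only if, for each $j$, $\mu\!\left(\sum_i k_{ij}\bs_i\right)$ equals the $j$th column of $\mu_y$, which is $\mu$ applied to the $j$th column of $S_y$; since $\mu$ is a bijection (it is a $1$–$1$ map, extended componentwise, hence injective, and by dimension count a bijection onto $(\F_q)^{um_a}$), this is equivalent to $\sum_i k_{ij}\bs_i$ being the $j$th column of $S_y$, i.e.\ to $(S_x)^TK=S_y$.

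Putting the columns back together gives the two-sided implication, and, since $K$ being non-singular over $\F_q$ is never actually used for the equivalence itself (it is only the hypothesis under which the statement is invoked later), no further work is needed there. The one point that requires a line of care — and the only place the argument could go wrong if stated carelessly — is that the scalars $k_{ij}$ must lie in $\F_q$, not in $\F_{q^u}$: the identity $\mu(ka)=k\,\mu(a)$ fails for $k\in\F_{q^u}\setminus\F_q$, precisely because $\mu$ is only $\F_q$-linear, not $\F_{q^u}$-linear. This is exactly why the lemma is stated for $K$ over $\F_q$.
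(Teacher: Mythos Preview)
Your argument is correct and is exactly the natural one: the map $\mu$ is an $\F_q$-linear bijection on each row, so right-multiplication by an $\F_q$-matrix $K$ commutes with applying $\mu$ rowwise, and injectivity of $\mu$ gives the converse. The paper itself offers no proof of this lemma, calling it a ``simple fact,'' so there is nothing to compare; your write-up supplies precisely the verification the paper omits.

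One cosmetic remark: the lemma as printed has a dimension mismatch (the product $(\mu_x)^T K$ is $(u m_a)\times m_b$ while $\mu_y$ is $m_b\times(u m_a)$), and the intended statement, as confirmed by its use in the proof of Theorem~\ref{theo:4.1}, is $(\mu_x)^T K=(\mu_y)^T$ and $(S_x)^T K=(S_y)^T$. You implicitly work with this corrected reading when you identify ``the $j$th column of $\mu_y$'' with $\mu$ applied to the $j$th row of $S_y$; it would be cleaner to flag the transpose explicitly, but the mathematics is unaffected.
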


\begin{lemma}\label{lem:4.3}
Let $H$ be any $(m \times n)$ matrix over $\F_q$ and $C_q(H)$
(respectively, $C_{q^u}(H)$) be the code over $\F_q$ (respectively,
over $\F_{q^u}$) with parity check matrix $H$. Then:
\begin{enumerate}[i)]
\item Any vector
$\bv = (v_1, \ldots, v_n) \in (\F_{q^u})^n$ is a codeword of $C_{q^u}(H)$
if and only if all rows of the matrix $[\mu(v_1)^T\,|\,\cdots \,|\,\mu(v_n)^T]$
are codewords of $C_q(H)$.
\item If $\varphi$ is an automorphism of $C_q(H)$, then it is an
automorphism of $C_{q^u}(H)$.
\end{enumerate}
\end{lemma}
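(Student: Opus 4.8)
The plan is to prove both parts by a direct, componentwise translation between the field $\F_{q^u}$ and the vector space $(\F_q)^u$ via the fixed $\F_q$-basis $\mu_0,\dots,\mu_{u-1}$. The central observation is that the parity-check relation $\sum_i v_i\,\bh^{(i)}=0$ over $\F_{q^u}$, once each unknown $v_i$ is written as $\sum_{\ell} v_{i,\ell}\,\mu_\ell$ with $v_{i,\ell}\in\F_q$, splits (by $\F_q$-linearity of $H$ and linear independence of the $\mu_\ell$) into $u$ independent parity-check relations over $\F_q$, one for each coordinate $\ell$. This is exactly the mechanism already used in equations (\ref{eq:2.3})--(\ref{eq:2.4}) for the lifted Hamming code, so part i) is essentially a restatement of that computation in full generality.

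For part i), first I would fix the notation: write $H=[\bh^{(1)}|\cdots|\bh^{(n)}]$ with columns $\bh^{(i)}\in(\F_q)^m$, and for $\bv\in(\F_{q^u})^n$ write $\mu(v_i)^T=(v_{i,1},\dots,v_{i,u})^T$, so that the matrix $[\mu(v_1)^T\,|\,\cdots\,|\,\mu(v_n)^T]$ has $u$ rows, its $\ell$th row being $(v_{1,\ell},\dots,v_{n,\ell})$. Then compute in $\F_{q^u}$:
\[
\sum_{i=1}^{n} v_i\,\bh^{(i)} \;=\; \sum_{i=1}^{n}\Bigl(\sum_{\ell=1}^{u} v_{i,\ell}\,\mu_{\ell-1}\Bigr)\bh^{(i)} \;=\; \sum_{\ell=1}^{u}\mu_{\ell-1}\Bigl(\sum_{i=1}^{n} v_{i,\ell}\,\bh^{(i)}\Bigr).
\]
Each inner sum $\sum_i v_{i,\ell}\,\bh^{(i)}$ is a vector in $(\F_q)^m\subseteq(\F_{q^u})^m$. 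Since $\mu_0,\dots,\mu_{u-1}$ form a basis of $\F_{q^u}$ over $\F_q$, the total sum vanishes in $(\F_{q^u})^m$ if and only if every inner sum vanishes in $(\F_q)^m$, i.e. if and only if each $\ell$th row of $[\mu(v_1)^T\,|\,\cdots\,|\,\mu(v_n)^T]$ lies in $C_q(H)$. That is precisely the claim.

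For part ii), recall (from the remarks preceding Definition~\ref{de:1.3}) that an automorphism $\varphi$ of a linear code over a prime-power field is, in the relevant case, a monomial matrix $M$ over the base field together possibly with a field automorphism; here $\varphi$ is given as an automorphism of $C_q(H)$, so in particular $\varphi$ acts on $(\F_q)^n$ and maps $C_q(H)$ to itself. Extend $\varphi$ coordinatewise to $(\F_{q^u})^n$ — a monomial matrix over $\F_q$ acts on $(\F_{q^u})^n$, and a field automorphism of $\F_q$ extends to one of $\F_{q^u}$ fixing $\F_q$ pointwise. The point is that $\varphi$ commutes with the passage described in part i): applying $\varphi$ to $\bv\in(\F_{q^u})^n$ has the effect of applying $\varphi$ simultaneously to all $u$ rows of $[\mu(v_1)^T\,|\,\cdots\,|\,\mu(v_n)^T]$ (using that $\varphi$ is $\F_q$-linear on the entries and, if a field automorphism is involved, that it fixes $\F_q$ so the basis coefficients $v_{i,\ell}$ are unaffected). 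Hence if $\bv\in C_{q^u}(H)$ then by part i) every row is in $C_q(H)$, so every $\varphi$-image of a row is in $C_q(H)$, so by part i) again $\varphi(\bv)\in C_{q^u}(H)$; thus $\varphi$ preserves $C_{q^u}(H)$, and being monomial (with a field automorphism) over $\F_{q^u}$ it is an automorphism of $C_{q^u}(H)$.

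The routine checking here is entirely the bookkeeping of how $\mu$ interacts with the coordinate action of $\varphi$; the only mild subtlety — and the one place I would be careful — is the field-automorphism component: one must check that conjugating by a generator of $\mathrm{Gal}(\F_q/\F_p)$ does not scramble the chosen $\F_q$-basis of $\F_{q^u}$, which is fine precisely because such an automorphism restricts to the identity nowhere except that we only need it to be $\F_q$-semilinear, and the coefficients $v_{i,\ell}$ it acts on already lie in $\F_q$. Everything else is linear algebra over $\F_q$.
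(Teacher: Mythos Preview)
The paper states Lemma \ref{lem:4.3} without proof, calling it one of several ``simple facts''. Your argument for part i) is correct and is precisely the computation the paper carries out in (\ref{eq:2.3})--(\ref{eq:2.4}) for the special case of a Hamming matrix; you have simply written it in general, which is fine. Your treatment of part ii) for a monomial matrix $\varphi$ over $\F_q$ is also correct: since $\mu$ is $\F_q$-linear, multiplication by a monomial $\F_q$-matrix commutes with the row decomposition, and part i) finishes the job. For the paper's purposes this is all that is needed, since the proof of Theorem \ref{theo:4.1} only invokes monomial automorphisms.

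Your handling of the field-automorphism component, however, is muddled. You write that an automorphism $\sigma\in\mathrm{Gal}(\F_q/\F_p)$ ``extends to one of $\F_{q^u}$ fixing $\F_q$ pointwise'' and that therefore ``the basis coefficients $v_{i,\ell}$ are unaffected''. This cannot be right: a nontrivial $\sigma$ does not fix $\F_q$, and any extension $\tilde{\sigma}$ to $\F_{q^u}$ will move both the coefficients $v_{i,\ell}\in\F_q$ and the basis vectors $\mu_\ell$. The clean way to argue is to bypass part i) entirely: if $\sigma$ preserves $C_q(H)$ then $\ker\sigma(H)=\ker H$ over $\F_q$, so $\sigma(H)=KH$ for some invertible $K$ over $\F_q$; then for any extension $\tilde{\sigma}$ and any $\bv\in C_{q^u}(H)$ one has $H\tilde{\sigma}(\bv)^T=K^{-1}\sigma(H)\tilde{\sigma}(\bv)^T=K^{-1}\tilde{\sigma}(H\bv^T)=0$. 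Alternatively, if you want to go through part i), note that $\tilde{\sigma}(\mu_0),\dots,\tilde{\sigma}(\mu_{u-1})$ is again an $\F_q$-basis, and with respect to \emph{that} basis the rows of the $\mu$-matrix of $\tilde{\sigma}(\bv)$ are $\sigma$ applied to the original rows; since part i) holds for any basis, this suffices. Either way, the claim is true, but your justification as written does not establish it.
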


The following statement generalizes the results of \cite{Rif2, Rif3}.

\begin{theorem}\label{theo:4.1}
Let $C(H^{q^u}_{m_a})$ and $C(H^q_{m_b})$ be two Hamming codes with parameters
$[n_a,n_a-m_a,3]_{q^u}$ and $[n_b,n_b-m_b,3]_q$, respectively, where
$n_a = (q^{u\,m_a}-1)/(q^u-1)$,~ $n_b=(q^{m_b}-1)/(q-1)$,~
$q$ is a prime power,~ $m_a, m_b \geq 2$,
and $u \geq 1$.
\begin{enumerate}[i)]
\item The code $C$ with parity check matrix $H = H^{q^u}_{m_a} \otimes
H^q_{m_b}$, the Kronecker product of $H^{q^u}_{m_a}$ and
$H^q_{m_b}$, is a completely transitive, and so completely regular, $[n, k, d;\rho]_{q^u}$ code
with parameters
\EQ\label{eq:4.11}
n=n_a\,n_b,~~k=n-m_a\,m_b,~~d=3,~~\rho=\min\{u\,m_a,~m_b\}.
\EN
\item The code $C$ has the intersection numbers:
\[
b_\ell ~=~ \frac{(q^{u\,m_a}-q^{\ell})(q^{m_b}-q^{\ell})}{(q-1)},\;\;\;\ell = 0, 1, \ldots, \rho-1,
\]
and
\[
c_{\ell} ~=~ q^{\ell-1}\frac{q^\ell-1}{q-1},\;\;\;\ell = 1, 2, \ldots, \rho.
\]
\item The lifted code $C_{m_b}(H^{q}_{um_a})$ is a completely regular code
with the same intersection array as $C$.
\end{enumerate}
\end{theorem}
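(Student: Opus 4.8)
The plan is to prove the three parts in order, with part i) doing the bulk of the work and parts ii)--iii) following essentially for free.

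For part i), I would first observe that the parameters $n=n_a n_b$ and $k=n-m_a m_b$ are immediate from the size of $H=H^{q^u}_{m_a}\otimes H^q_{m_b}$ (an $(m_a m_b)\times(n_a n_b)$ matrix over $\F_{q^u}$) together with the fact, noted in the text after Lemma~\ref{lem:4.1}, that $B[\bc]A=0$ so the rank of $H$ is $m_a m_b$. The claim $d=3$ follows because any two columns of $H$ are linearly independent over $\F_{q^u}$: a column is $\ba_j\otimes\bb_s$, and $\ba_j\otimes\bb_s,\ \ba_{j'}\otimes\bb_{s'}$ are proportional only if $\ba_j\parallel\ba_{j'}$ and $\bb_s\parallel\bb_{s'}$, which forces $j=j'$, $s=s'$ by the Hamming property; and one checks three columns can be dependent, e.g. using a dependency among three columns of $H^q_{m_b}$ lifted through a single column of $H^{q^u}_{m_a}$. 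The value of $\rho$ is exactly Lemma~\ref{lem:4.1}. The real content is complete transitivity. Here the strategy is the one already set up in the preliminary lemmas: a coset $\bv+C$ is determined by its syndrome $S_v=B[\bv]A$, a $(m_b\times m_a)$ matrix over $\F_{q^u}$, equivalently by the $(m_b\times um_a)$ matrix $\mu_v$ over $\F_q$ obtained by applying $\mu$ rowwise (Definition~\ref{de:4.1}). I would show that $\Aut(C)$ acts on these syndrome-matrices with enough transitivity that two cosets lie in the same orbit iff their $\mu$-matrices have the same $\F_q$-row-space dimension, which runs from $0$ to $\rho$; hence $\rho+1$ orbits. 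The automorphisms to exploit are: (a) monomial transformations of the $B$-factor coming from $\mathrm{GL}_{m_b}(\F_q)$ acting on the rows of the syndrome (this is where Lemma~\ref{lem:4.2} is used, translating a nonsingular row operation on $\mu_v$ to one on $S_v$), (b) monomial transformations of the $A$-factor from $\mathrm{GL}_{um_a}(\F_q)\supseteq\mathrm{GL}_{m_a}(\F_{q^u})$ acting on columns of $\mu_v$ (using Lemma~\ref{lem:4.3}, which says $\F_q$-automorphisms of the $A$-code lift to $\F_{q^u}$-automorphisms), and the fact that each Hamming code's automorphism group realizes the full projective general linear group on its column set. Combining row and column operations over $\F_q$, any $(m_b\times um_a)$ matrix over $\F_q$ can be brought to a canonical diagonal form depending only on its rank $t\le\min\{m_b,um_a\}=\rho$, so the orbits are indexed by $t\in\{0,1,\dots,\rho\}$, giving complete transitivity; completely regular then follows from the remark after Definition~\ref{de:1.3}.

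For part ii), once complete transitivity is in hand the intersection numbers are determined by counting. I would compute $c_\ell$ and $b_\ell$ combinatorially: $\mu_\ell$, the number of cosets of weight $\ell$ (equivalently whose canonical syndrome has rank $\ell$), is the number of $\ell$-dimensional subspaces arising, namely the number of $(m_b\times um_a)$ matrices over $\F_q$ of rank $\ell$ divided by the orbit-stabilizer count; then $b_\ell/c_{\ell+1}=\mu_{\ell+1}/\mu_\ell$ by Lemma~\ref{lem:2.2}. Alternatively, and more cleanly, I would note that the $\mu$-picture is \emph{literally identical} to the syndrome description of the lifted code $C_{m_b}(H^q_{um_a})$: in both cases a coset is an $(m_b\times um_a)$ matrix over $\F_q$ up to the same group action, because $H^q_{um_a}$ has as its columns a projective basis of $\F_q^{um_a}$, exactly matching the role played by $\mu$ applied to columns of $H^{q^u}_{m_a}$. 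This gives part iii) simultaneously and shows the two intersection arrays coincide; then I only need to quote the already-known formula (\ref{eq:2.100}) with $m_a\rightsquigarrow um_a$, $m_b\rightsquigarrow m_b$ from Theorem~\ref{theo:2.2} (the lifting theorem) to read off $b_\ell=\dfrac{(q^{um_a}-q^\ell)(q^{m_b}-q^\ell)}{q-1}$ and $c_\ell=q^{\ell-1}\dfrac{q^\ell-1}{q-1}$.

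For part iii), the honest statement is that $C_{m_b}(H^q_{um_a})$ is completely regular by Theorem~\ref{theo:2.2} directly (it is a lift of the $q$-ary Hamming $[N,N-um_a,3]_q$ code with $N=(q^{um_a}-1)/(q-1)$, lifted to $\F_{q^{m_b}}$), with $\rho=\min\{um_a,m_b\}$ and intersection numbers $(\ref{eq:2.100})$ with parameters $um_a$ and $m_b$ --- which are exactly those of $C$ from part ii). So part iii) is essentially a remark identifying the two arrays, and its proof is: apply Theorem~\ref{theo:2.2}, then compare formulas. I expect the main obstacle to be part i), specifically making the orbit argument airtight --- in particular verifying that the product of the row-action from $\mathrm{GL}_{m_b}(\F_q)$ (via $B$-side monomial maps and Lemma~\ref{lem:4.2}) and the column-action from $\mathrm{GL}_{um_a}(\F_q)$ (via $A$-side automorphisms lifted through Lemma~\ref{lem:4.3}) really are simultaneously realized inside a single $\Aut(C)$, and that no further identifications collapse distinct rank classes, so that the orbit count is exactly $\rho+1$ and not fewer.
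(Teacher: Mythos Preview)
Your overall strategy coincides with the paper's: get $\rho$ from Lemma~\ref{lem:4.1}, prove complete transitivity through the syndrome picture using Lemma~\ref{lem:4.2} and Kronecker-factor automorphisms, then deduce the intersection numbers and quote Theorem~\ref{theo:2.2} for part~iii). Your route to ii) via identifying the $\mu$-syndrome data with that of $C_{m_b}(H^q_{um_a})$ is a clean shortcut; the paper instead computes $b_\ell$ directly (counting $(u m_a\times m_b)$ matrices of rank $\ell{+}1$ differing in one row from a fixed rank-$\ell$ matrix) and then recovers $c_\ell$ from Lemma~\ref{lem:2.2}.

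The one real problem is in your transitivity argument. You claim that the $A$-side furnishes the full $\mathrm{GL}_{um_a}(\F_q)$ on the columns of $\mu_v$, citing Lemma~\ref{lem:4.3}. But that lemma concerns codes whose parity check matrix has entries in $\F_q$; the matrix $H^{q^u}_{m_a}$ lives over $\F_{q^u}$, so Lemma~\ref{lem:4.3} does not apply to the $A$-factor. Monomial automorphisms of $C_A$ yield only $\mathrm{GL}_{m_a}(\F_{q^u})$ acting on the columns of $S_v$, and its image in $\mathrm{GL}_{um_a}(\F_q)$ is a proper subgroup when $u>1$. It is on the $B$-side that Lemma~\ref{lem:4.3} is relevant: since $H^q_{m_b}$ is over $\F_q$, monomial automorphisms of the $q$-ary Hamming code $C(H^q_{m_b})$, which realise all of $\mathrm{GL}_{m_b}(\F_q)$ on rows, lift to automorphisms of $C_B$ over $\F_{q^u}$ and hence of $C$. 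This is exactly how the paper proceeds: it exhibits only a $B$-side automorphism $I_{n_a}\otimes\phi$, produces the required $K\in\mathrm{GL}_{m_b}(\F_q)$ from $\mu_x,\mu_y$ via Lemma~\ref{lem:4.2}, and defers the remaining details of the orbit argument to \cite{Rif2}. So the obstacle you flag at the end is genuine, but its resolution is not to inflate the $A$-side column action to $\mathrm{GL}_{um_a}(\F_q)$; you have the roles of Lemma~\ref{lem:4.3} on the two factors reversed.
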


\begin{proof}
The proof is mostly based on the same arguments which we used in the two
previous papers \cite{Rif2, Rif3}. We shortly repeat
only the places which it differ from the quoted papers.

First, from Lemma \ref{lem:4.1}, we have that $\rho = \min\{m_b, u\,m_a\}$.

The next step is to prove that the code $C$ is completely
transitive. This part is coming from similar arguments, which
we have used in the proof of \cite[Theo. 1]{Rif2}. The only
difference is that we have to use here Lemma \ref{lem:4.2} in
order to guaranty the existence of the invertible $m_b\times m_b$
matrix $K$ over $\F_q$ such that the equality $S^T_x K = S^T_y$
holds where $S_x$ and $S_y$ are the syndromes of vectors $\bx$ and
$\by$ over $\F_{q^u}$.

Denote by $C_A$ and $C_B$ the codes over $\F_{q^u}$, with parity
check matrices $A$ and $B$, respectively.

To prove that $C$ is a completely transitive code it is enough to
show that starting from two vectors $\bx, \by \in C(\ell)$, ~$1
\leq \ell \leq \rho$, there exists a monomial matrix $\varphi\in
\Aut(C)$ such that $\bx\varphi \in \by + C$ or, computing the
syndrome (Lemma \ref{lem:4.2}),
\[
S_{\bx\varphi}=[(A\otimes B)(\bx\varphi)^T] = [(A\otimes
B)(\by)^T].
\]
Let $\phi_1$ be any monomial $(n_a\times n_a$) matrix and $\phi_2$
be any monomial ($n_b\times n_b$) matrix. It is well known
\cite{Marc} that
$$(A\phi_1)\otimes (B\phi_2) = (A\otimes B)(\phi_1 \otimes \phi_2)$$
and $\phi_1\otimes \phi_2$ is a monomial ($n_an_b\times n_an_b$)
matrix.

Note that if $\varphi \in \Aut(C)$ then $H\varphi^T$ is a parity
check matrix for $C$ when $H$ is. Therefore, taking the specific
case where $\phi^T_1 \in \Aut(C_A)$ and $\phi^T_2 \in \Aut(C_B)$
we conclude that $(\phi^T_1 \otimes \phi^T_2)^T \in \Aut(C)$, or
the same $\phi_1 \otimes \phi_2 \in \Aut(C)$.

The two given vectors $\bx, \by$ belong to $C(\ell)$ and so, from~\cite{Rif2},
$\rank(S_x) = \rank(S_y)=\ell$, where $S_x$ and $S_y$ are the
syndrome of $\bx$ and $\by$, respectively. To prove that $C$ is a
completely transitive code we  show that there exists a
monomial matrix $\phi^T \in \Aut(C_B)$ such that
\begin{eqnarray*}
(A\otimes B)\by^T & = & (A \otimes B \phi)\bx^T\\
& = & (A \otimes B) ((I_{n_a}\otimes \phi)\bx^T)
\end{eqnarray*}
where $I_{n_a}$ is the $n_a \times n_a$ identity matrix.

Since $\ell \leq \rho \leq m_b$, it is straightforward to find an
invertible $(m_b\times m_b)$ matrix $K$ over $\F_q$ such that
$\mu^T_x\,K = \mu^T_y$. By Lemma \ref{lem:4.2} we conclude that
$S^T_x\,K=S^T_y$. Since $B$ is the parity check matrix of a
Hamming code, the matrix $K^T\,B$ is again a parity check matrix
for a Hamming code and $K^T B=B \phi$ for some monomial matrix
$\phi$. Moreover, if $G_B$ is the corresponding generator matrix
for this Hamming code, i.e. $B\,G_B^T=0$, then
$(B\phi)G_B^T= (K^T B)G_B^T=0$ and so $\phi^T \in \Aut(C_B)$.

Finally, we have
\begin{eqnarray*}
(A\otimes B)\by^T& = &S_y = K^T\,S_x = K^T (B[\bx] A^T)\\
& = & B\phi [\bx] A^T= (A\otimes B\phi)\bx^T\\
& = &(A\otimes B)((I_{n_a}\otimes \phi)\bx^t).
\end{eqnarray*}

Since the code $C$ is completely
transitive we conclude that $C$ is completely regular with the parameters (\ref{eq:4.11}). This gives item i).
Now we have to write down the expressions for all intersection numbers. In this case we use the same approach as in \cite{Rif3}.

We begin computing $b_0$, so the number of vectors in $C(1)$
which are at distance one from one given vector in $C$. Without
loos of generality (since $C$ is a linear code we can fix
the zero codeword $\bo$ in $C$ and count how many different vectors
of weight one there are in $C(1)$. The answer is immediately
\[
b_0~=~n\,(q^u-1) ~=~ \frac{(q^{u\,m_a} - 1)(q^{m_b}-1)}{(q - 1)}.
\]
Since the code $C$ has minimum distance $d=3$, we have $c_1=1$.

In general, let $1 \leq i \leq \rho-1$.  Take a  $(u\,m_a \times m_b)$-matrix
$E$ of rank $i$, over $F_q$, and compute the value $b_i$ as the number of
different $(u\,m_a \times m_b)$-matrices ${\bar E}$, over $F_q$, of
rank $i + 1 \leq \rho$, such that $E-{\bar E}$ has only one nonzero row. This
value is well known (see reference in \cite{Rif3}):
$$
b_i =  \frac{(q^{u\,m_a}-q^{i})(q^{m_b}-q^{i})}{(q-1)}\,.
$$
Now, using the expressions for $b_{i-1}$, $\mu_i$ and $\mu_{i-1}$ from Lemma
\ref{lem:2.2}, we obtain
\[
c_i = \frac{\mu_{i-1}b_{i-1}}{\mu_i}= q^{i-1}\frac{(q^{i}-1)}{q-1},
\]
i.e., we have item $ii)$.

The last statement $iii)$ follows directly from Theorem \ref{theo:2.2}.\qed
\end{proof}
\begin{remark}
We have to remark here that in the statement $iii)$ we can not
choose the code $C_{m_b}(H^{q^u}_{m_a})$ (instead of
$C_{m_b}(H^{q}_{um_a})$), which seems to be natural. We
emphasize that the codes $C_{m_b}(H^{q}_{um_a})$ and
$C_{m_b}(H^{q^u}_{m_a})$ are not only different completely regular
codes, but they induce different distance-regular graphs with
different intersection arrays. So, the code
$C_{m_b}(H^{q}_{um_a})$ suits to the codes from $i)$ in the sense
that it has the same intersection array. For example, the code
$C_2(H_3^{2^2})$ induces a distance-regular graph with intersection array
$(315,240;1,20)$ and the code $C_2(H_6^2)$ gives a distance-regular graph with intersection array $(189,124;1,6)$. To reach these results in both cases we use
the same Theorem \ref{theo:2.2}.
\end{remark}
\begin{remark}
The above theorem (Theorem \ref{theo:4.1}) can not be extended to the more general
case when the alphabets $\F_{q^a}$ and $\F_{q^b}$ of component
codes $C_A$ and $C_B$, respectively, neither $\F_{q^a}$ is a subfield of $\F_{q^b}$ or vice versa $\F_{q^b}$ is a subfield of $\F_{q^a}$. We illustrate it by
considering the smallest nontrivial example. Take two Hamming
codes, the $[5,3,3]$ code $C_A$ over $\F_{2^2}$ with parity check
matrix $H_2^{2^2}$, and the $[9,7,3]$ code $C_B$ over $\F_{2^3}$ with
parity check matrix $H_2^{2^3}$. Then the resulting $[45,41,3]$
code $C = C(H_2^{2^2} \otimes H_2^{2^3})$ over $\F_{2^6}$ is not
even uniformly packed in the wide sense, since it has the covering
radius $\rho=3$ and the outer distance $s=7$, which can be
checked by considering the parity check matrix of $C$.
\end{remark}

\section{Completely regular codes with different parameters, but the same
intersection array}

In \cite[Theo. 2.11]{Rif3} it is proved that lifting a $q$-ary Hamming code
$C(H_m^q)$ to $\F_q^s$  we obtain a completely regular code
$C_s(H_m^q)$ which is not necessarily isomorphic to the code $C_m(H_s^q)$.
However, both codes $C_s(H_m^q)$ and $C_m(H_s^q)$ have the same intersection array.
As we saw above, the code obtained by
the Kronecker product construction, or our extension for the case when the component codes have different alphabets, can have the same
intersection array. The next statements are the main results of our paper.

\begin{theorem} \label{theo:main}
Let $q$ be any prime number and let $a,b,u$ be any natural numbers. Then:
\begin{enumerate}[1)]
\item There exist the following
completely regular codes with different parameters $[n,k,d;\rho]_{q^r}$, where $d=3$ and $\rho=\min\{ua,b\}$:
\begin{enumerate}[i)]
\item $C_{ua}(H_b^{q})\;\mbox{over $\F_q^{ua}$ with}\;n = \frac{q^{b}-1}{q-1},\;k=n - b;$
\item $C_b(H_{ua}^{q})\;\mbox{over $\F_q^{b}$ with}\;n = \frac{q^{ua}-1}{q-1},\;k=n - ua;$
\item $C(H^q_b \otimes H^q_{ua}  )\;\mbox{over $\F_q$ with}\;n = \frac {q^{ua}-1}{q-1} \times \frac{q^b-1}{q-1} ,\;k=n - b ua;$
\item $C(H^q_b \otimes H^{q^a}_{u})\;\mbox{over $\F_q^{a}$ with}\;n = \frac{q^b-1}{q-1} \times \frac{q^{ua}-1}{q^u-1},\;k=n - bu;$
\item $C(H^q_b \otimes H^{q^u}_a)\;\mbox{over $\F_q^{u}$ with}\;n = \frac{q^b-1}{q-1} \times \frac{q^{ua}-1}{q^u-1},\;k=n - b a;$
\end{enumerate}
\item All the above codes have the same intersection numbers
\[
b_\ell=\frac{(q^b-q^{\ell})(q^{ua}-q^{\ell})}{(q-1)},\;\ell=0, \ldots, \rho-1,\;\; c_{\ell}=q^{\ell-1}\frac{q^\ell-1}{q-1},\;\ell=1, \ldots, \rho.
\]
\item All codes above coming from Kronecker constructions are completely transitive.
\end{enumerate}
\end{theorem}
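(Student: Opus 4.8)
The plan is to derive this consolidated statement almost entirely by specialization of the results already established in the excerpt, treating the new labels i)--v) as a dictionary between the present notation and the hypotheses of Theorems~\ref{theo:2.1}, \ref{theo:2.2} and \ref{theo:4.1}. First I would observe that since $q$ is prime and $a,b,u$ are natural numbers, all the parity check matrices $H_b^q$, $H_{ua}^q$, $H_u^{q^a}$, $H_a^{q^u}$ are Hamming parity check matrices over the indicated fields, so each of the five codes is well defined with the stated length $n$ and dimension $k$ (the dimension count $k=n-m_am_b$ in the Kronecker cases, $k=n-m$ in the lifting cases). I would then verify the covering radius $\rho=\min\{ua,b\}$ case by case: for i) and ii) this is $\min\{ua,b\}$ directly by Theorem~\ref{theo:2.2} (taking $m=b,r=ua$ and $m=ua,r=b$ respectively, and using $\min\{b,ua\}=\min\{ua,b\}$); for iii) it is $\min\{b,ua\}$ by Theorem~\ref{theo:2.1} with $m_a=b$, $m_b=ua$; for iv) and v) it follows from Lemma~\ref{lem:4.1} — iv) uses $u'=a$, $m_a'=u$, $m_b'=b$ so $\rho=\min\{au,b\}$, and v) uses $u'=u$, $m_a'=a$, $m_b'=b$ so $\rho=\min\{ua,b\}$.

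Next I would establish item 2), the common intersection array. For each of the five codes the intersection numbers are read off from the formulas already proven: from~(\ref{eq:2.100}) in Theorem~\ref{theo:2.1} for iii), from Theorem~\ref{theo:2.2} (which forwards to~(\ref{eq:2.100})) for i) and ii), and from item ii) of Theorem~\ref{theo:4.1} for iv) and v). In every case the $b_\ell$ has the form $(q^{M}-q^\ell)(q^{M'}-q^\ell)/(q-1)$ with $\{M,M'\}=\{b,ua\}$: for iii) directly, for i) and ii) the two exponents are $b$ and $ua$ in either order, for iv) the exponents coming out of Theorem~\ref{theo:4.1}ii) are $u\cdot a$ and $b$, and for v) they are $u\cdot a$ and $b$ as well. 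Since the expression is symmetric in $M$ and $M'$, all five give exactly $b_\ell=(q^b-q^\ell)(q^{ua}-q^\ell)/(q-1)$, and the $c_\ell=q^{\ell-1}(q^\ell-1)/(q-1)$ is literally the same formula in all cases. This also forces the covering radii to agree as the common value $\rho$ at which $b_{\rho-1}\neq 0$ but $b_\rho$ would vanish, consistent with the case analysis above.

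For item 3) I would simply point out which of the five codes arise from a Kronecker product, namely iii), iv) and v): code iii) is completely transitive by Theorem~\ref{theo:2.1}, and codes iv) and v) are completely transitive by item i) of Theorem~\ref{theo:4.1} (applied with $(u',m_a',m_b')=(a,u,b)$ and $(u,a,b)$ respectively), while i) and ii) are the lifted codes covered by Theorem~\ref{theo:2.2} and are only asserted to be completely regular. Thus item 3) is immediate once the correspondence of parameters is in place.

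The routine part is bookkeeping: matching each of i)--v) to the correct instantiation of an earlier theorem and checking that $\min\{ua,b\}$, $\min\{b,ua\}$, and the exponent pairs $\{b,ua\}$ all coincide under the symmetry of the formulas. The only place demanding genuine care — and the natural main obstacle — is the distinction flagged in the Remark following Theorem~\ref{theo:4.1}: in case iv) one must use $H_u^{q^a}$ (a Hamming matrix over $\F_{q^a}$ with $u$ rows, giving length $(q^{ua}-1)/(q^u-1)$ because the relevant "$q$" for that Hamming code is $q^a$), not some other bracketing of the exponents, and similarly in v) one must use $H_a^{q^u}$; writing $n=(q^b-1)/(q-1)\times(q^{ua}-1)/(q^u-1)$ in both iv) and v) is correct precisely because $(q^{ua}-1)/(q^u-1)=(( q^u)^{a}-1)/(q^u-1)$ equals the Hamming length over $\F_{q^u}$ with $a$ rows and also equals the Hamming length over $\F_{q^a}$ with $u$ rows only after the substitution $q^{ua}=(q^a)^{u}$ — so I would be explicit that the two superficially identical length formulas come from two different Hamming codes over two different fields, and that this is exactly why both iv) and v) land on the same intersection array as iii).
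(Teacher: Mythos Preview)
Your approach is exactly the paper's: codes i)--ii) are handled by Theorem~\ref{theo:2.2}, code iii) by Theorem~\ref{theo:2.1}, and codes iv)--v) by Theorem~\ref{theo:4.1}, after which the common intersection array and complete transitivity are read off from the cited formulas. One small slip in your final paragraph: your claim that $(q^{ua}-1)/(q^u-1)$ also equals the Hamming length over $\F_{q^a}$ with $u$ rows is false --- that length is $((q^a)^u-1)/(q^a-1)=(q^{ua}-1)/(q^a-1)$, so the identical denominators in the stated $n$ for iv) and v) are a typographical inconsistency in the statement rather than something you can justify; your parameter dictionary $(u',m_a',m_b')=(a,u,b)$ for iv) is correct and yields $n_a=(q^{ua}-1)/(q^a-1)$.
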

\begin{proof}
The first two codes are obtained by the known lifting construction of the
corresponding perfect codes and they all come from Theorem \ref{theo:2.2}.
The third code is obtained by the known Kronecker product construction (both components
have the same alphabet) and come from Theorem \ref{theo:2.1}.
The two last codes are obtained by the Kronecker construction when the two component codes have different alphabets ($q$ and, $q^a$ or $q^u$))
and come from Theorem \ref{theo:4.1}.

For every code we find the covering radius and compute the intersection array using the corresponding
expressions given in the quoted theorems. All these codes have covering radius $\rho=\min\{ua,b\}$.

Complete transitivity of all codes coming from Kronecker constructions follows from Theorem \ref{theo:2.1} and Theorem \ref{theo:4.1}.\qed
\end{proof}

It is easy to see that the number of different completely transitive
(and, therefore, completely regular) codes with different parameters
and the same intersection array is growing. To be more specific we summarize
the results in the following Corollary, which comes
straightforwardly from the above Theorem \ref{theo:main}.

For a given natural number $n$, let $n_{(i)}$ be any divisor of
$n$ and $n^{(i)}=\frac{n}{n_{(i)}}$. Denote by $\tau(n)$ the number
of divisors of $n$.

\begin{corollary}
Given a prime power $q$ choose any two natural numbers $a,b >1$.
We can build the following completely regular codes with identical
intersection array and covering radius $\rho=\min\{a,b\}$. Specifically,
we construct a code over $\F_{q^r}$ for each divisor $r$ of $a$ or $b$
(the number of different codes is upper bounded by $\tau(a)+\tau(b)$) and we obtain:
\begin{enumerate}[i)]
\item Completely transitive codes $C(H^{q^{a^{(i)}}}_{a_{(i)}} \otimes H^{q}_{b})$ over $\F_{q^{a^{(i)}}}$, for any divisor $a_{(i)}$ of $a$, $a_{(i)}\neq 1$.
\item Completely transitive codes $C(H^{q}_{a} \otimes H^{q^{b^{(i)}}}_{b_{(i)}})$ over $\F_{q^{b^{(i)}}}$, for any divisor $b_{(i)}$ of $b$, $b_{(i)}\neq 1$.
\item Completely regular codes $C_{a}(H^{q}_{b})$ over $\F_{q^{a}}$ and $C_{b}(H^{q}_{a})$ over $\F_{q^{b}}$.
\end{enumerate}
\end{corollary}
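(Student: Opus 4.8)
The plan is to obtain the whole statement by specializing the three free parameters $a,b,u$ of Theorem~\ref{theo:main}; no new construction is required. I would first settle part~iii): taking $u=1$ in items~1.i) and~1.ii) of Theorem~\ref{theo:main} yields exactly $C_{a}(H^{q}_{b})$ over $\F_{q^{a}}$ and $C_{b}(H^{q}_{a})$ over $\F_{q^{b}}$, with covering radius $\min\{1\cdot a,\,b\}=\min\{a,b\}$, and item~2) of that theorem records their intersection numbers.

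For part~i) I would fix a divisor $a_{(i)}\neq 1$ of $a$, put $a^{(i)}=a/a_{(i)}$, and apply Theorem~\ref{theo:main} with the replacement $(a,b,u)\mapsto(a_{(i)},b,a^{(i)})$. Then the product $ua$ of the theorem becomes $a^{(i)}a_{(i)}=a$, so the covering radius stays $\min\{a,b\}$, and item~1.v) produces the completely transitive code $C(H^{q}_{b}\otimes H^{q^{a^{(i)}}}_{a_{(i)}})$ over $\F_{q^{a^{(i)}}}$. This is the code listed in~i) after interchanging the two Kronecker factors; since $A\otimes B$ and $B\otimes A$ differ only by a fixed permutation of rows and a fixed permutation of columns, $C(A\otimes B)$ and $C(B\otimes A)$ are permutation-equivalent, hence share $n$, $k$, $d$, $\rho$ and the intersection array, and one is completely transitive precisely when the other is. For part~ii) I would symmetrically fix a divisor $b_{(i)}\neq 1$ of $b$, put $b^{(i)}=b/b_{(i)}$, and apply Theorem~\ref{theo:main} with $(a,b,u)\mapsto(b^{(i)},a,b_{(i)})$; now the product $ua$ equals $b$ while the theorem's $b$ equals $a$, so again $\rho=\min\{a,b\}$, and item~1.iv) gives directly $C(H^{q}_{a}\otimes H^{q^{b^{(i)}}}_{b_{(i)}})$ over $\F_{q^{b^{(i)}}}$, completely transitive by item~3).

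Next I would check uniformity of the invariants. In each of these specializations the unordered pair $\{b,\,ua\}$ of Theorem~\ref{theo:main} becomes $\{a,b\}$, so item~2) gives $b_{\ell}=(q^{a}-q^{\ell})(q^{b}-q^{\ell})/(q-1)$ for $\ell=0,\dots,\rho-1$ and $c_{\ell}=q^{\ell-1}(q^{\ell}-1)/(q-1)$ for $\ell=1,\dots,\rho$, with the common $\rho=\min\{a,b\}$; since these formulas are symmetric in $a$ and $b$ and the length of the array depends only on $\rho$, all the listed codes carry the same intersection array. For the count of alphabets: as $a_{(i)}$ runs over the divisors $>1$ of $a$, the exponent $a^{(i)}=a/a_{(i)}$ runs over all divisors of $a$ that are $<a$ (the value $a^{(i)}=1$ giving the $q$-ary Kronecker code $C(H^{q}_{a}\otimes H^{q}_{b})$ of Theorem~\ref{theo:2.1}), and adjoining $\F_{q^{a}}$ from part~iii) we obtain one code over $\F_{q^{r}}$ for every divisor $r$ of $a$; symmetrically for $b$. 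Hence the number of codes produced is at most $\tau(a)+\tau(b)$.

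I do not expect a genuine obstacle: the statement is a bookkeeping corollary of Theorem~\ref{theo:main}, and the hypotheses $a,b>1$ together with $a_{(i)},b_{(i)}\neq 1$ are exactly what keeps every specialization inside the range of validity of Theorems~\ref{theo:2.1}, \ref{theo:2.2} and~\ref{theo:4.1}. The one point deserving an explicit sentence is the reordering of the Kronecker factors in~i) (and in the $q$-ary case), namely the permutation-equivalence of $C(A\otimes B)$ and $C(B\otimes A)$ and the fact that permutation-equivalence preserves complete transitivity and the intersection array; everything else is substitution of parameters into the already-proved theorem.
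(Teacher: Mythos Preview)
Your proposal is correct and matches the paper's approach: the paper provides no formal proof, stating only that the corollary ``comes straightforwardly from the above Theorem~\ref{theo:main}'', and you have carried out precisely that specialization of parameters. Your one extra remark about the permutation-equivalence of $C(A\otimes B)$ and $C(B\otimes A)$ is the only point needing a sentence beyond pure substitution (alternatively, for part~i) one may invoke Theorem~\ref{theo:4.1} directly with $u=a^{(i)}$, $m_a=a_{(i)}$, $m_b=b$, which already has the factors in the stated order).
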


\section{Uniformly packed codes}

Recall that a trivial $q$-ary repetition $[n, 1, n]_q$-code is a
perfect code if and only if $q=2$ and $n$ is odd. Denote by
$R^q_n=[I_{n-1}|-\bf{1}^T]$ the parity check matrix of the
$q$-ary repetition $[n,1,n]_q$ code. The following
statement generalizes the corresponding result of \cite{Rif2}.

\begin{theorem}\label{theo:4.3}
Let $C(H_m^{q^u})$ be the $q^u$-ary (perfect) Hamming
$[n,k,3]_{q^u}$-code of length $n_a=(q^{um}-1)/(q^u-1)$ and
$C(R^q_{n_b})$ be the  repetition $[n_b,1,n_b]_q$-code, where $q$
is a prime power,~$u \geq 1$,\, $m \geq 2$,\, $4 \leq n_b \leq
(q^u-1)n_a + 1$.
\begin{itemize}
\item The code $C = C(H_m^{q^u}\otimes R^q_{n_b})$ is a $q^u$-ary
uniformly packed (in the wide sense) $[n, k, d]_{q^u}$-code with
covering radius $\rho=n_b-1$ and parameters \EQ\label{eq:4.1} n =
n_a\,n_b,~~k = n - m\,(n_b-1),~~d = 3. \EN 
\item The code $C$ is
not completely regular.
\end{itemize}
\end{theorem}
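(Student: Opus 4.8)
The plan is to handle the two bullets separately: first show that the Kronecker product of a Hamming parity-check matrix with a repetition parity-check matrix yields a code whose covering radius equals its outer distance, hence uniformly packed by Lemma~\ref{lem:2.1}(ii); then exhibit a coset-weight distribution that depends on more than the coset weight, contradicting Definition~\ref{de:1.2}, to prove the code is not completely regular.

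For the first bullet, the parameters $n=n_an_b$, $k=n-m(n_b-1)$, $d=3$ are immediate from the Kronecker construction: the matrix $H_m^{q^u}\otimes R^q_{n_b}$ has $m(n_b-1)$ rows, these are easily seen to be independent, and no two columns are proportional over $\F_{q^u}$ while some triple is dependent, giving $d=3$. For the covering radius, I would argue exactly as in Lemma~\ref{lem:4.1}: a syndrome is an $(n_b-1)\times m$ matrix over $\F_{q^u}$ whose rows lie in the row span generated by the columns of $R^q_{n_b}$ (which, together with the all-zero column, are $\be_1,\dots,\be_{n_b-1}$ and $-\mathbf 1$, spanning all of $\F_q^{\,n_b-1}$ but with the constraint coming from the repetition structure). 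The point is that a coset is determined by a tuple $(w_1,\dots,w_{n_b-1})$ of vectors in $\F_{q^u}^{\,m}$ (the "column syndromes"), and adding a single error changes exactly one $w_s$ by an arbitrary element of $\F_{q^u}^{\,m}$ reachable from a column of $H_m^{q^u}$, i.e. by any nonzero vector up to scalar; so $\rho$ is the maximal number of coordinates one must touch, which is $n_b-1$ (touch each block once). For the outer distance $s$, one computes the nonzero weights of $C^\perp$, which is the code generated by the rows of $H_m^{q^u}\otimes R^q_{n_b}$; using the known weight enumerator of Hamming-type codes and the repetition factor one checks there are exactly $n_b-1$ distinct nonzero weights, so $s=n_b-1=\rho$ and Lemma~\ref{lem:2.1}(ii) applies.

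For the second bullet, the strategy is to find two cosets of the same weight with different weight distributions, which by Definition~\ref{de:1.2} rules out complete regularity; this is cleaner than computing intersection numbers. A natural choice: take a coset whose syndrome has its full "mass" concentrated so that its weight-$\rho$ representative is forced to use one error per block, versus a coset of the same minimal weight but whose syndrome can be rebalanced among blocks in more ways. Concretely, because within each block the repetition code $C(R^q_{n_b})$ over $\F_{q^u}$ is a genuine repetition code (not a Hamming code), the number of ways to realize a given block-syndrome $w_s$ by a weight-$t$ vector depends heavily on $w_s$ itself (whether $w_s$ is a scalar multiple of a "good" direction), unlike in the all-Hamming case where the MacWilliams-type regularity holds. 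I would pick explicit small representatives — e.g. one coset with syndrome supported on a single block and another with the same total weight spread over two blocks — and count codewords of minimum weight in each, showing the counts differ.

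\textbf{Main obstacle.} The delicate part is the non-complete-regularity argument: one must produce two cosets of \emph{equal weight} but \emph{unequal weight distributions}, and verifying the inequality requires an honest (small) combinatorial count of low-weight vectors in a coset of $C$, organized block by block. The asymmetry between the Hamming factor and the repetition factor is what makes this possible, and pinning down the smallest example where the coset-weight-distribution genuinely branches — rather than merely suspecting it does — is the crux; once such an example is exhibited, the conclusion is immediate. The uniformly-packed half, by contrast, is routine modulo the outer-distance computation, which follows the template of \cite{Rif2}.
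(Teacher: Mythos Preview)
Your plan matches the paper's proof essentially point for point: compute the outer distance $s=n_b-1$ from the weight spectrum of $C^\perp$ (the paper cites \cite[Lemma~4, Theo.~3]{Rif2} for exactly the fact that nonzero combinations of rows of $H_m^{q^u}$ have a single nonzero weight and that each additional row of $R^q_{n_b}$ contributes one new weight), bound $\rho\le s$ via Lemma~\ref{lem:2.1}(i), exhibit a syndrome forcing $\rho\ge n_b-1$, and invoke Lemma~\ref{lem:2.1}(ii); for non-complete-regularity the paper simply says ``by the same argument used in \cite{Rif2}'', which is precisely the two-cosets-of-equal-weight-but-different-distribution idea you sketch.

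One small omission worth tightening: you do not say where the hypothesis $n_b\le (q^u-1)n_a+1=q^{um}$ enters. In the paper it is used in the $\rho\ge n_b-1$ step: one needs $n_b-1$ \emph{pairwise distinct} vectors $\bx_1,\dots,\bx_{n_b-1}\in\F_{q^u}^{\,m}$ to build a syndrome that cannot be covered with fewer than $n_b-1$ columns (and the paper explicitly notes that once $n_b$ exceeds $q^{um}$ two of the $\bx_i$ must coincide, dropping $\rho$ below $s$). Your ``touch each block once'' phrasing gives only the upper bound $\rho\le n_b-1$; make sure your write-up supplies the matching lower bound and flags the role of the hypothesis there.
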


\begin{proof}
First, we find the outer distance of code $C$. Using
\cite[Lemma 4]{Rif2} we see that any linear combination of rows of $H_m^{q^u}$
has weight either $q^{u(m-1)}$ or zero. By the same arguments used in
\cite[Theo. 3]{Rif2} (any row
of the parity check matrix of $C(R^q_{n_b})$ adds one more value
to the weight of $C^\perp$) we conclude that $s(C) = n_b-1$.

Now, about the covering radius of code $C$, we claim that
for the case $n_b \leq (q^u-1)n_a + 1$ we have $\rho = n_b-1$.

Since $\rho(C) \leq s(C)$ (Lemma \ref{lem:2.1}), it is enough to
show that $\rho(C) \geq n_b-1$. Take an arbitrary column vector
$\bx=(\bx_1,\bx_2,\ldots,\bx_{n_b-1})^T$ where $\bx_i$ is a vector
of length $m$ over $\F_{q^u}$. Present this vector
as a linear combination of columns of $H_m^{q^u}\otimes R^q_{n_b}$.
For any vector $\bx_i$ there is a column of $H_m^{q^u}$ which differs
from $\bx_i$ by a scalar. Choose as vectors $\bx_1,\bx_2,\ldots,\bx_{n_b-1}$
all possible different vectors of length $m$ over $\F_{q^u}$. This vector can be presented as a linear combination, at least, of
$n_b-1$ columns of $H_m^{q^u}\otimes R^q_{n_b}$. Hence,
$\rho \geq n_b-1$ and by Lemma \ref{lem:2.1} we conclude that $\rho = n_b-1$,
and, therefore, the resulting code is uniformly packed code.

Since $(q^u-1)n_a + 1 = q^{u m}$ for the case when $n_b \geq (q-1)n_a + 2$,
we can not choose all vectors $\bx_i$ such that they are different. So, if $n_b = (q-1)n_a + 2$,
for example, then two subvectors $\bx_i$ and, say, $\bx_j$ should be the same.
Now take as columns of $H_m^{q^u}\otimes R^q_{n_b}$ the column
$\bh = (\bh_1, \ldots, \bh_{n_b-1})^T$ with the same subcolumns $\bh_i=\bh_j$.
As a result we obtain $\rho=n_b-2$, but $s=n_b-1$, implying that the resulting
code is not uniformly packed.

To complete the proof we have to show that $C$ is not completely
regular. This comes by
the same argument used in \cite{Rif2}.
\end{proof}

\section{Coset distance-regular graphs}

Following \cite{Bro1}, we give some facts on
distance-regular graphs. Let $\Gamma$ be a finite connected simple
graph (i.e., undirected, without loops and multiple edges). Let
$d(\gamma, \delta)$ be the distance between two vertices $\gamma$
and $\delta$ (i.e., the number of edges in the minimal path
between $\gamma$ and $\delta$). The {\em diameter} $D$ of $\Gamma$
is its largest distance. Two vertices $\gamma$ and $\delta$ from
$\Gamma$ are {\em neighbors} if $d(\gamma, \delta) = 1$. Define
\begin{eqnarray*}
\Gamma_i(\gamma) ~=~\{\delta \in \Gamma:~d(\gamma, \delta) = i\}.
\end{eqnarray*}
An {\em automorphism} of a graph $\Gamma$ is a permutation $\pi$
of the vertex set of $\Gamma$ such that, for all $\gamma, \delta
\in \Gamma$ we have $d(\gamma,\delta)=1$ if and only if
$d(\pi\gamma,\pi\delta)=1$. Let $\Gamma_i$ be the graph with the
same vertices of $\Gamma,$ where an edge $(\gamma, \delta)$ is
defined when the vertices $\gamma, \delta$ are at distance $i$ in
$\Gamma$. Clearly, $\Gamma_1=\Gamma$. 
A
graph is called \textit{complete} (or a \textit{clique}) if any
two of its vertices are adjacent.
A connected graph $\Gamma$ with diameter $D\geq 3$ is called {\em
antipodal} if the graph $\Gamma_D$ is a disjoint union of
cliques~\cite{Bro1}. 

\begin{definition}\cite{Bro1}\label{14:de:1.3}
A simple connected graph $\Gamma$ is called {\em distance-regular}
if it is regular of valency $k,$ and if for any two vertices
$\gamma, \delta \in \Gamma$ at distance $i$ apart, there are
precisely $c_i$ neighbors of $\delta$ in $\Gamma_{i-1}(\gamma)$
and $b_i$ neighbors of $\delta$ in $\Gamma_{i+1}(\gamma)$.
Furthermore, this graph is called {\em distance-transitive}, if
for any pair of vertices $\gamma, \delta$ at distance $d(\gamma,
\delta)$ there is an automorphism $\pi$ from $\mbox{\rm
Aut}(\Gamma)$ which moves this pair $(\gamma,\delta)$ to any other
given pair $\gamma', \delta'$  of vertices at the same distance
$d(\gamma, \delta) = d(\gamma', \delta')$.
\end{definition}

The sequence $(b_0, b_1, \ldots, b_{D-1}; c_1, c_2, \ldots, c_D),$
where $D$ is the diameter of $\Gamma,$ is called the {\em
intersection array} of $\Gamma$. The numbers $c_i, b_i,$ and
$a_i,$ where $a_i=k- b_i - c_i,$ are called {\em intersection
numbers}. Clearly $b_0 = k,~~b_D = c_0 = 0,~~c_1 = 1.$

Let $C$ be a linear completely regular code with covering radius
$\rho$ and intersection array  $(b_0, \ldots , b_{\rho-1}; c_1,
\ldots c_{\rho})$. Let $\{B\}$ be the set of cosets of $C$. Define
the graph $\Gamma_C,$ which is called the {\em coset graph of
$C$}, taking all different cosets $B = C+ {\bf x}$ as vertices,
with two vertices $\gamma = \gamma(B)$ and $\gamma' = \gamma(B')$
adjacent if and only if the cosets $B$ and $B'$ contain neighbor
vectors, i.e., there are ${\bf v} \in B$ and ${\bf v}' \in B'$
such that $d({\bf v}, {\bf v}') = 1$.

\begin{lemma}\cite{Bro1,Ripu}\label{lem:2.5}
Let $C$ be a linear completely regular code with covering radius
$\rho$ and intersection array  $(b_0, \ldots , b_{\rho-1}; c_1,
\ldots c_{\rho})$ and let $\Gamma_C$ be the coset graph of $C$.
Then $\Gamma_C$ is distance-regular of diameter $D=\rho$ with the
same intersection array. If $C$ is completely transitive, then
$\Gamma_C$ is distance-transitive.
\end{lemma}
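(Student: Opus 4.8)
The plan is to reduce the whole statement to one structural identity --- that the graph distance in $\Gamma_C$ equals the weight of the corresponding difference coset --- and then to read off distance-regularity directly from the completely regular property of $C$. I first observe that translations are graph automorphisms: for $\bz\in\F_q^n$ the map $T_\bz$ sending the vertex $\gamma(C+\bx)$ to $\gamma(C+\bx+\bz)$ preserves adjacency, since whether $B$ and $B'$ are adjacent depends only on the difference coset $B-B'$, which $T_\bz$ leaves unchanged; in particular $\Gamma_C$ is vertex-transitive. I then prove
\[
d_{\Gamma_C}\bigl(\gamma(B),\gamma(B')\bigr)=\wt(B-B').
\]
By vertex-transitivity it suffices to take $B'=C$ and show $d_{\Gamma_C}(\gamma(B),\gamma(C))=\wt(B)$. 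For the upper bound I choose a minimum-weight representative $\bv\in B$ with $\wt(\bv)=i$, write $\bv$ as a sum of $i$ weight-one vectors, and use the partial sums to exhibit a path $C=B_0,B_1,\dots,B_i=B$ whose consecutive cosets differ by a weight-one vector and are therefore adjacent. For the lower bound I note that each edge changes the current coset by a weight-one vector, so a path of length $k$ from $\gamma(C)$ to $\gamma(B)$ writes some representative of $B$ as a sum of $k$ weight-one vectors, whence $\wt(B)\le k$ by subadditivity of the weight. This identity gives at once that $\Gamma_i(\gamma(C))$ is exactly the set of weight-$i$ cosets, and, since the largest coset weight is the covering radius, that the diameter is $D=\rho$.

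Next I would deduce distance-regularity with the asserted intersection numbers. Fix $\gamma=\gamma(C)$ and a vertex $\delta=\gamma(B)$ at distance $i$; by the identity $\wt(B)=i$, so any representative $\bx\in B$ lies in $C(i)$. The neighbours of $\delta$ are the cosets $B+\alpha\be_j$ with $\alpha\in\F_q^*$ and $j\in\{1,\dots,n\}$, and the decisive point is that, because $d\ge 3$, the assignment $(\alpha,j)\mapsto B+\alpha\be_j$ is injective: two weight-one vectors congruent modulo $C$ would differ by a nonzero codeword of weight at most $2<d$, which is impossible. Hence the coset-neighbours of $B$ are in weight-preserving bijection with the vector-neighbours $\bx+\alpha\be_j$ of $\bx$, using $\wt(B+\alpha\be_j)=d(\bx+\alpha\be_j,C)$. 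The completely regular property of $C$ states that among the vector-neighbours of $\bx\in C(i)$ exactly $c_i$ lie in $C(i-1)$ and exactly $b_i$ in $C(i+1)$; transporting through the bijection shows $\delta$ has exactly $c_i$ neighbours in $\Gamma_{i-1}(\gamma)$ and exactly $b_i$ in $\Gamma_{i+1}(\gamma)$. As these counts depend only on $i$, $\Gamma_C$ is distance-regular with intersection array $(b_0,\dots,b_{\rho-1};c_1,\dots,c_\rho)$, the same as that of $C$.

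For the distance-transitive claim I would use that every $\sigma\in\Aut(C)$ acts on vertices by $(\bx+C)\mapsto\bx^\sigma+C$ and is itself a graph automorphism, since a monomial transformation (or field automorphism) preserves Hamming weight and hence the weight-one differences that define adjacency. Assuming $C$ completely transitive, $\Aut(C)$ has exactly $\rho+1$ orbits on cosets; each orbit lies in a single weight class, and since all $\rho+1$ weights $0,1,\dots,\rho$ are realized (every weight class is nonempty, as $c_i\ge 1$ for $i\le\rho$), the $\rho+1$ orbits are precisely the $\rho+1$ weight classes. Thus $\Aut(C)$ acts transitively on the cosets of any fixed weight $i$ while fixing $\gamma(C)$. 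Given two pairs of vertices at the same distance $i$, I apply translations to move both initial vertices to $\gamma(C)$, then an element of $\Aut(C)$ carrying one (now weight-$i$) terminal vertex to the other, and finally translate back; the composite is a graph automorphism sending the first pair to the second, so $\Gamma_C$ is distance-transitive.

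The main obstacle I anticipate is precisely the injectivity step in the distance-regular argument: it is here that the hypothesis $d\ge 3$ is essential, guaranteeing that the coset-neighbour counts of $\Gamma_C$ coincide with the vector-neighbour counts $b_i,c_i$ of $C$ rather than being merely proportional to them (for $d=2$ a weight-two codeword would collapse two weight-one translates into one coset and break the match). By contrast, the reduction to the zero coset via translations and the two-sided bound for the distance identity are routine, though they must be stated carefully so as to license transporting the completely regular counts onto the graph.
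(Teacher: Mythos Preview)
The paper does not supply its own proof of this lemma; it is quoted as a known fact from \cite{Bro1,Ripu} and used as a black box. Your argument is the standard one and is correct: reduce to the identity $d_{\Gamma_C}(\gamma(B),\gamma(B'))=\wt(B-B')$ via translations, then transport the counts $b_i,c_i$ from vectors to cosets, and for distance-transitivity combine the translation action with the $\Aut(C)$-action on weight classes. There is nothing to compare against in the paper itself.

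One remark on your closing paragraph. Your instinct that the injectivity of $(\alpha,j)\mapsto B+\alpha\be_j$ is the delicate point is exactly right, and in fact it exposes a small imprecision in the lemma as printed: the hypothesis $d\ge 3$ is not stated, yet without it the clause ``with the same intersection array'' can fail. For instance, the binary $[2,1,2]$ code $\{00,11\}$ is completely regular with $(b_0;c_1)=(2;2)$, while its coset graph is $K_2$ with $(1;1)$. In the present paper this is harmless because every code considered has $d=3$, and the cited sources carry the appropriate hypothesis; but you should be aware that the assumption you introduced is genuinely needed for the conclusion as worded, not merely a convenience of your particular proof.
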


From all different completely transitive codes  described
above in Theorem \ref{theo:main}, we obtain distance-transitive graphs with
classical parameters (see \cite{Bro1}). These graphs have $q^{uab}$
vertices, diameter $D = \min\{ua,b\}$, and
intersection array given by
$$
b_l = \frac{(q^{ua}-q^{l})(q^{b}-q^{l})}{(q-1)}; \;
c_l=q^{l-1}\frac{q^l-1}{q-1},
$$
where $0 \leq l \leq D$.

Notice that bilinear forms graphs~\cite[Sec. 9.5]{Bro1} have the
same parameters and are distance-transitive too. These graphs are
uniquely defined by their parameters (see \cite[Sec. 9.5]{Bro1}). Therefore, all graphs coming from the completely regular and completely transitive codes described in Theorem \ref{theo:main} are bilinear forms graphs.
We did not find in the literature (in particular in
\cite{Del2}, where  the association schemes, formed by bilinear
forms, have been introduced, the description of these graphs,
as many different coset graphs of different completely regular codes. It is also known that these graphs are not antipodal
and do not have antipodal covers (see \cite[Sec. 9.5]{Bro1}). This can also be
easily seen from the proof of Lemma \ref{lem:4.1}. Indeed, a given vector
$\bx\in C(\rho)$ has many neighbors in $C(\rho)$.

\begin{theorem}\label{theo:graphs}
Let $C_1$, $C_2$, $\ldots$, $C_k$ be a family of linear completely transitive codes
constructed by
Theorem \ref{theo:4.1} and let
$\Gamma_{C_1}, \,\Gamma_{C_2},\, \ldots, \Gamma_{C_k}$ be their corresponding
coset graphs. Then:
\begin{enumerate}[i)]
\item Any graph $\Gamma_{C_i}$ is a distance-transitive graph, induced by
bilinear forms.
\item If any two codes $C_i$ and $C_j$ have the same intersection array, then
the graphs $\Gamma_{C_i}$ and $\Gamma_{C_j}$ are isomorphic.
\item If the graph $\Gamma_{C_i}$ has $q^{m}$ vertices, where $m$ is not a prime,
then it can be presented as a coset graph by several different ways,
depending on the number of factors of $m$.
\end{enumerate}
\end{theorem}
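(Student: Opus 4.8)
The plan is to treat the three items in order, each one reducing to a result already established in the excerpt. For item i), the key observation is that every code $C_i$ produced by Theorem~\ref{theo:4.1} is completely transitive with the intersection array displayed there, namely
\[
b_\ell=\frac{(q^{u m_a}-q^{\ell})(q^{m_b}-q^{\ell})}{(q-1)},\qquad
c_\ell=q^{\ell-1}\frac{q^\ell-1}{q-1}.
\]
By Lemma~\ref{lem:2.5}, the coset graph $\Gamma_{C_i}$ is therefore distance-transitive with this same intersection array, of diameter $D=\rho=\min\{u m_a,m_b\}$ and on $q^{u m_a m_b}$ vertices. I would then invoke the classification quoted from \cite[Sec.~9.5]{Bro1}: a distance-transitive (indeed, a distance-regular) graph with these classical parameters is uniquely a bilinear forms graph $H_q(u m_a, m_b)$ (or $H_q(m_b, u m_a)$). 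This gives i) directly.

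For item ii), suppose $C_i$ and $C_j$ have the same intersection array. By i) both $\Gamma_{C_i}$ and $\Gamma_{C_j}$ are bilinear forms graphs, and a bilinear forms graph is determined up to isomorphism by its intersection array (again \cite[Sec.~9.5]{Bro1}, where the uniqueness is stated). Since the two graphs have the same intersection array, they are isomorphic. I would note in passing that the intersection array records the triple $(q; u m_a, m_b)$ up to swapping the last two entries, so "same intersection array" is exactly "same unordered pair of bilinear-forms dimensions over the same field".

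For item iii), suppose $\Gamma_{C_i}$ has $q^{m}$ vertices with $m$ composite. From i) the graph is the bilinear forms graph on $q^{m_1 m_2}$-many vertices with $m_1 m_2 = m$, where $\{m_1, m_2\}$ is the defining pair (here $m_1 = u m_a$, $m_2 = m_b$ with the conventions of Theorem~\ref{theo:4.1}, or with the conventions of Theorem~\ref{theo:main} the pair is $\{ua, b\}$). The point is that Theorem~\ref{theo:main} — specifically items 1.iii), 1.iv), 1.v) together with the lifting constructions 1.i), 1.ii) — exhibits, for \emph{each} factorization $m = r\cdot s$ with $r,s>1$ (and additionally for each refinement of one factor via the parameter $u$), a distinct completely transitive code whose coset graph has this intersection array; by ii) all of these coset graphs are isomorphic to $\Gamma_{C_i}$. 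Hence $\Gamma_{C_i}$ is presented as a coset graph in as many ways as $m$ admits such factorizations, i.e.\ the number grows with the number of factors of $m$, and in particular is at least two when $m$ is composite. I would make the counting precise by appealing to the Corollary following Theorem~\ref{theo:main}, where the bound $\tau(a)+\tau(b)$ on the number of such codes is already recorded.

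The routine parts — verifying that the intersection arrays in Theorems~\ref{theo:2.1}, \ref{theo:2.2}, \ref{theo:4.1} genuinely coincide for matching parameters, and that the various constructions yield genuinely non-isomorphic codes (different lengths, different alphabets) — have all been done in the excerpt and in \cite{Rif2,Rif3}, so I would cite them rather than redo them. The only real content to check is the \emph{uniqueness} input: that a distance-regular graph with the stated classical parameters must be a bilinear forms graph. This is the step I expect to be the main obstacle in the sense that the whole argument rests on it; fortunately it is exactly the characterization recorded in \cite[Sec.~9.5]{Bro1}, so here it can simply be quoted.
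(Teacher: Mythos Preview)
Your proposal is correct and follows essentially the same approach as the paper's own proof: for i) you pass from complete transitivity via Lemma~\ref{lem:2.5} to distance-transitivity and then invoke the uniqueness of bilinear forms graphs from \cite[Sec.~9.5]{Bro1}; for ii) you use that same uniqueness; and for iii) you appeal to Theorem~\ref{theo:main} (and its Corollary) to produce the multiple presentations. The paper's proof is terser but structurally identical, and it too rests entirely on quoting the characterization of bilinear forms graphs from \cite{Bro1}.
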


\begin{proof}
The proofs are straightforward. Given a completely transitive code $C_i$,
constructed by Theorem \ref{theo:4.1}, we conclude that the corresponding
coset graph is distance-transitive with the same intersection array (Lemma \ref{lem:2.5}).
Then, by using \cite[Sec. 9.5]{Bro1}, we conclude that this graph is
uniquely defined by their parameters and, therefore, it is induced by 
bilinear forms. Since two codes $C_i$ and $C_j$ with the same intersection arrays
induce two coset graphs with the same parameters, we conclude that
the corresponding graphs  $\Gamma_{C_i}$ and $\Gamma_{C_j}$ are isomorphic.
The last statement follows from Theorem \ref{theo:main}, since it gives
codes with the same intersection array.\qed
\end{proof}

\section{Conclusions}

In the current paper we use the Kronecker product
construction \cite{Rif2} for the case when component codes have
different alphabets and connect the resulting completely regular
codes with codes obtained by lifting  $q$-ary perfect codes.
This gives several different infinite classes of completely
regular codes with different parameters and with
identical intersection arrays. Given a prime power $q$ and any
two natural numbers $a,b$, we construct completely transitive
codes over different fields with covering radius $\rho=\min\{a,b\}$
and  identical intersection array, specifically, one code over
$\F_{q^r}$ for each divisor $r$ of $a$ or $b$.
We prove that the corresponding induced distance-regular coset
graphs are equivalent. In other words, the large class of
distance-regular graphs, induced by bilinear forms \cite{Del2},
can be obtained as coset graphs from different non-isomorphic
completely regular codes (either obtained by the
Kronecker product construction from perfect codes over different
alphabets, or obtained by lifting perfect codes \cite{Rif3}). Similar
results are obtained for uniformly packed codes in the wide sense.
Under the same conditions, explicit construction of an infinite
family of $q$-ary uniformly packed codes (in the wide sense) with
covering radius $\rho$, which are not completely regular, is also
given.

Finally, an open question arises: \textit{are bilinear forms graphs
the only distance-transitive graphs which have such many different presentations as coset graphs?}

\section{Acknowledgments}

This work has been partially supported by the Spanish MICINN grant
TIN2013-40524, the Catalan AGAUR grant 2014SGR-691, and also by
Russian fund of fundamental researches (15-01-08051).

\nocite{}
\section{References}


\begin{thebibliography}{99}
\bibitem{Bas1}
L.A. Bassalygo, G.V. Zaitsev, V.A. Zinoviev, Uniformly packed codes, Problems Inform. Transmiss., 10.1 (1974) 9-14.

\bibitem{Bas2}
L.A. Bassalygo, V.A. Zinoviev, Remark on uniformly packed codes, Problems Inform. Transmiss., 13.3 (1977) 22-25.

\bibitem{Bro1}
A.E. Brouwer, A.M. Cohen, A. Neumaier, Distance-Regular Graphs, ed., Springer, Berlin, 1989.

\bibitem{Del1}
P. Delsarte, An algebraic approach to the association schemes of coding theory, Philips Research Reports Supplements, 10 (1973).

\bibitem{Del2}
P. Delsarte, Bilinear forms over a finite field with applications to coding theory, Journal of Combinatorial Theory, Series A, 25 (1978) 226-241.

\bibitem{Gill}
N. I. Gillespie, C. E. Praeger, Uniqueness of certain completely regular Hadamard codes, Journal of Combinatorial Theory, Series A, 120.7 (2013) 1394-1400.

\bibitem{Giudici}
M. Giudici, C.E. Praeger, Completely Transitive Codes in Hamming Graphs, Europ. J. Combinatorics  20 (1999) 647–662.

\bibitem{Kool}
J. H. Koolen, W. S. Lee,  W. J. Martin, Characterizing completely regular codes from an algebraic viewpoint. Combinatorics and Graphs, Contemporary Mathematics 531 (2009) 223-242.

\bibitem{Marc}
M. Marcus, H. Minc, A survey of matrix theory and matrix inequalities, ed., Allyn and Bacon, Boston, 1964.

\bibitem{Neum}
A. Neumaier, Completely regular codes, Discrete Maths., 106/107 (1992) 335-360.

\bibitem{Ripu}
J. Rif\`{a}, J. Pujol, Completely transitive codes and distance-transitive graphs, in: Proc. 9th International Conference, AAECC-9, in: LNCS, vol. 539, 1991,

\bibitem{Rif2}
J. Rif\`{a}, V.A. Zinoviev, New completely regular $q$-ary codes, based on Kronecker products, IEEE Transactions on Information Theory, 56.1 (2010) 266-272.

\bibitem{Rif3}
J. Rif\`{a}, V.A. Zinoviev, On lifting perfect codes, IEEE Transactions on Information Theory, 57.9 (2011) 5918-5925.

\bibitem{Sole}
P. Sol\'{e}, Completely Regular Codes and Completely Transitive Codes, Discrete Maths., 81 (1990) 193-201.
\end{thebibliography}
\end{document}